\newcommand{\N}{\mathbb{N}}
\newcommand{\Z}{\mathbb{Z}}
\newcommand{\R}{\mathbb{R}}
\newcommand{\C}{\mathbb{C}}
\newcommand{\re}{\mathop{\mathrm{Re}}}
\newcommand{\im}{\mathop{\mathrm{Im}}}
\newcommand{\eps}{\varepsilon} 
\newcommand{\be}{\begin{equation}}
\newcommand{\ee}{\end{equation}}
\newcommand{\balpha}{{\boldsymbol{\alpha}}}
\newcommand{\cX}{\mathscr{X}} %added
\newcommand{\cF}{\mathscr{F}} %added
\newcommand{\ml}{\vskip 4pt\noindent}
\newcommand{\mr}{\mathbb{R}}
\newcommand{\mn}{\mathbb{N}}
\newcommand{\ba}{\boldsymbol{a}}
\newcommand{\fB}{\mathfrak{B}}
\newcommand{\fE}{\mathfrak{E}}
\numberwithin{equation}{section}
\DeclareMathOperator\gr{graph}
\DeclareMathOperator\supp{supp}
\newtheorem{remark}{Remark}
\newtheorem{proposition}{Proposition}
\newtheorem{theorem}{Theorem}
\newtheorem{definition}{Definition}
\begin{document}

\title{On Some Generalizations of B-Splines}

\author{Peter Massopust}

\address{Centre of Mathematics, Technical University of Munich\\ Boltzmannstr. 3,
85478 Garching b. M\"unchen, Germany\\
massopust@ma.tum.de}

\subjclass[2010]{26A33, 28A80, 41A05, 46F05, 65D07}

\keywords{B-splines, cardinal splines, exponential splines, self-referential function, fractal interpolation, fractal function}

\maketitle

\begin{abstract}
In this article, we consider some generalizations of polynomial and exponential B-splines. Firstly, the extension from integral to complex orders is reviewed and presented. The second generalization involves the construction of uncountable families of self-referential or fractal functions from polynomial and exponential B-splines of integral and complex orders. As the support of the latter B-splines is the set $[0,\infty)$, the known fractal interpolation techniques are extended in order to include this setting.
\end{abstract}

%%%%%%%%%%%%%%%%%%%%%%%
\section{Introduction}
%%%%%%%%%%%%%%%%%%%%%%%

Schoenberg's polynomial B-splines \cite{schoen} are a powerful tool in approximation theory because of their favorable analytic and computational properties. Unfortunately, polynomial B-splines also have some disadvantages. Amongst them, we list:
\begin{itemize}
\item Polynomial B-splines have only integer smoothness which is linked to the integer order $n$. However, for approximation-theoretic purposes, it is useful to fill in the gaps in the smoothness spectrum $C^n$, $n\in \N$. There are many functions that are elements of, for instance,  H\"older spaces $C^{n, \alpha}$, $0\leq\alpha < 1$.
\item Polynomial B-splines do not contain phase information. The importance of approximation functions to be able to provide phase information is exemplified by the so-called Oppenheim-Lim Experiment \cite{ol}. In their paper, Oppenheim \& Lim showed that the Fourier reconstruction of an image using only the modulus of the complex-valued Fourier coefficients results in less informative content than a reconstruction from the phase of the Fourier coefficients (and setting the modulus equal to 1). The reconstruction from phase showed singularities such as corners and edges quite clearly but they were hard to see in the reconstruction from the modulus. 

In addition, there are sometimes requirements for a single-band frequency analysis. For some applications, e.g., for phase retrieval tasks, complex-valued analysis bases are needed since real-valued bases can only provide a symmetric spectrum.
\item Polynomial splines are ill-suited for describing functions or data that exhibit sudden growth or decay because of their oscillatory behavior near the points where such an increase or decrease occurs \cite{sb}. 
\end{itemize}

The first two items in the above list can be resolved by extending the order of B-splines from integral $n$ to complex $z$ with $\re z > 1$. The thus obtained so-called complex B-splines \cite{fub} generate a two-parameter family of functions with a continuous smoothness spectrum and built-in phase information. 

The third issue can be rectified by introducing exponential splines and B-splines. These splines are employed to model phenomena that follow differential  systems of the form $\dot{x} = A x$, where $A$ is a constant matrix. For such equations the solutions are linear combinations of functions of the type $e^{a x}$ and $x^n e^{a x}$, $a\in \R$. Like  polynomial B-splines, exponential B-splines can be defined as finite convolution products of exponential functions. See \cite{ammar,dm1,mccartin,sakai1, spaeth,unserblu05,zoppou} for an incomplete list of references for exponential splines. The extension of exponential B-splines to complex order \cite{m} adds the option of applying them for the retrieval of phase information. 

Neither the original nor extended polynomial and exponential B-splines are appropriate approximants when functions exhibit complex intrinsic characteristics such as self-referential or fractal behavior. In these cases, one needs to resort to fractal interpolation and approximation techniques to describe them. The extension of polynomial B-splines to an uncountable family of self-referential or fractal functions indexed by a finite tuple of real numbers $\alpha_i\in (-1,1)$ was presented in, i.e., \cite{m05,m10,ns}. Here we consider the case of exponential B-splines of integral order and also the fractal generalization of polynomial and exponential B-splines of complex orders. The latter requires extending fractal interpolation techniques to unbounded domains.

The structure of this article is as follows. For the sake of presentation and completeness, we briefly introduce polynomial and exponential B-splines and their complex order extensions in Sections \ref{pbsplines}, respectively, \ref{expbsplines}. A brief introduction to self-referential functions is provided in Section \ref{selfref} and in the final Section \ref{selfrefbexp} uncountably many families of self-referential polynomial and exponential B-splines of complex orders are constructed.

%%%%%%%%%%%%%%%%%%%%%%%
\section{Polynomial B-Splines}\label{pbsplines}
%%%%%%%%%%%%%%%%%%%%%%%

In this section, we briefly review polynomial splines and their basis functions, polynomial B-splines. The interested reader may consult the large literature on splines for more details and further results. 

To this end, let $\cX = \{a= x_0 < x_1 < \cdots < x_k<x_{k+1}=b\}$ be a set of points, called knots, supported on the real line $\R$.
\begin{definition}
A \emph{spline of order $n$} on $[a,b]$ with knot set $\cX$ is a function $s:[a,b]\to\mathbb{R}$ such that
\begin{enumerate}
\item[(i)] On each subinterval $[x_{i-1},x_i)$, $s$ is a polynomial of order at most $n$ (degree at most $n-1$);\ml
\item[(ii)]   $s\in C^{n-2}[a,b]$.
\end{enumerate}
$s$ is called a \emph{cardinal spline} if the knot set is a contiguous subset of $\Z$.
\end{definition}
The set ${\mathcal S}_{\cX,n}$ of all spline functions $s$ of order $n$ over a knot set $\cX$ forms an $\R$-vector space of dimension $n+k$. A convenient and powerful basis of ${\mathcal S}_{\cX,n}$ is given by Schoenberg's cardinal polynomial B-Splines \cite{schoen}. They are recursively defined as follows. Denote by $\chi$ the characteristic function on $[0,1]$ and set
\begin{align}
B_{1} (x) &:= \chi(x),\nonumber\\
B_{n}(x) &:= (B_{n-1}\ast B_{1})(x) = \int_0^1 B_{n-1} (x - t) dt,\qquad 2 \leq n \in \N,\label{eq1.1}
\end{align}
where $\ast$ denotes the convolution between functions. An immediate consequence of this definition is that $\supp B_n = [0,n]$ and that $B_n\in C^{n-2}$, $n\in \N$, with $C^{-1}$ denoting the family of piecewise continuous functions. Some graphs of these cardinal polynomial B-splines are shown in Figure \ref{fig1}.
\begin{figure}[h!]
\begin{center}
\includegraphics[width=5cm,height=2.5cm]{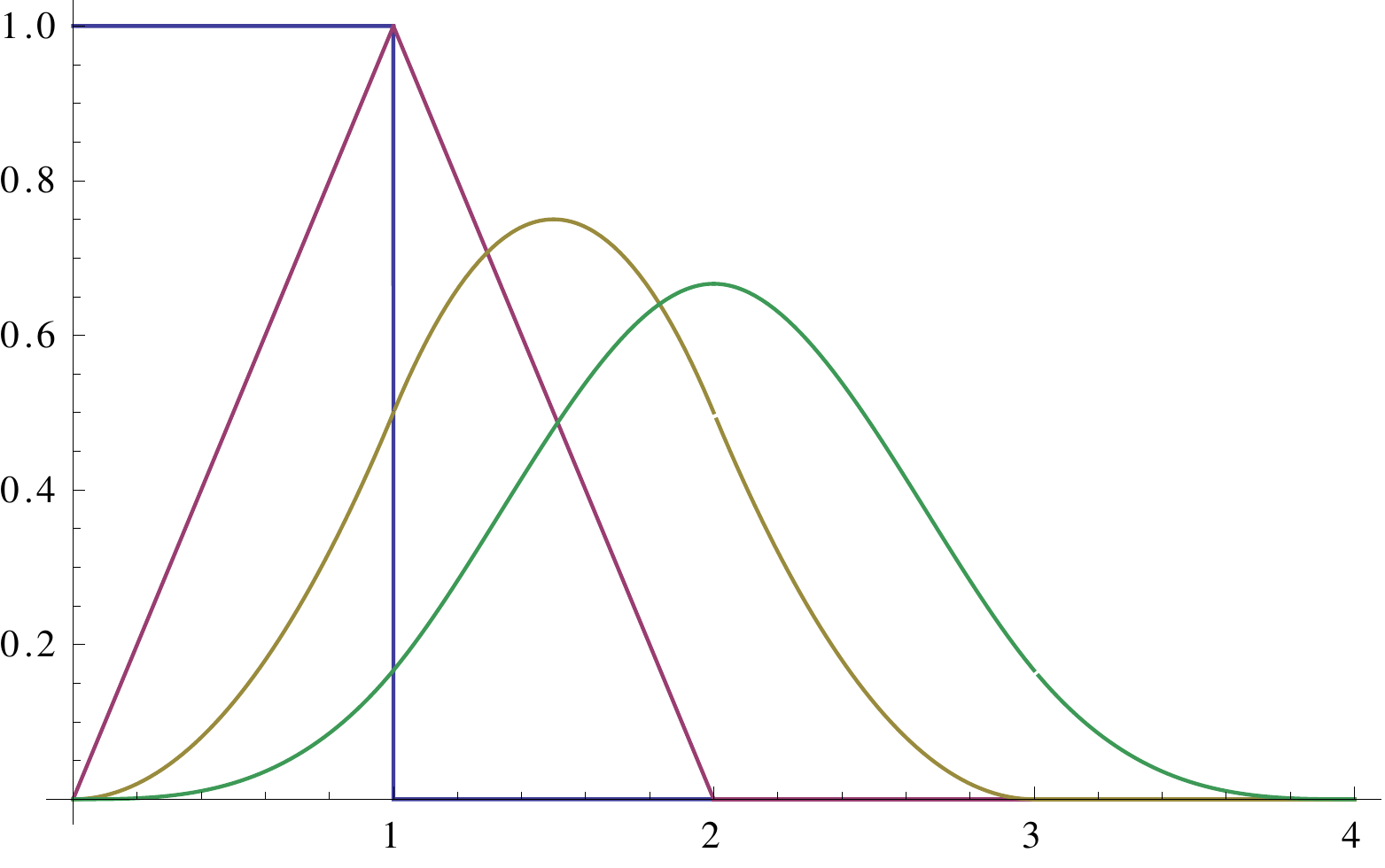}
\caption{Some graphs of polynomial B-splines: $n=1,2,3,4$.}\label{fig1}
\end{center}
\end{figure}

Taking the Fourier transform of \eqref{eq1.1} yields the Fourier representation of $B_n$, which is sometimes used to define the B-splines.
\be\label{eq1.2}
\widehat{B}_{n}(\omega) := \cF(B_n)(\omega)  := \int_\mr B_n(x) e^{- i\omega x} dx = \left(\frac{1-e^{- i\omega}}{i\omega} \right)^{n}.
\ee
It can be shown, either using \eqref{eq1.1} or \eqref{eq1.2} that the $n$-order B-spline has an explicit representation in the form
\be\label{eq2.3a}
B_n (x)  = \frac{1}{\Gamma (n)}\,\sum_{k=0}^{\infty} (-1)^k \binom{n}{k} (x-k)_+^{n-1},
\ee 
where $x_+ := \max\{0,x\}$.

The collection $\{B_n : n\in \mn\}$ is thus a {discrete} family of functions with increasing smoothness and support. Both the support and the smoothness are tied to the integral order $n$.
%
%\begin{frame}{Representation of Polynomial B-Splines}
%Time domain representation:
%\begin{align*}
%B_n (x) & = \frac{1}{\Gamma (n)}\,\sum_{j=0}^{\infty} (-1)^j \binom{n}{j} (x-j)_+^{n-1}
%\end{align*} 
%Fourier (frequency) domain representation:
%\begin{align*}
%\cF (B_n)(\omega) =: \widehat{B}_{n}(\omega) &= \int_\R B_n(x) e^{-i\omega x} dx = {\underbrace{\left(\frac{1-e^{-i\omega}}{i\omega}\right)}_{\widehat{\chi}(\omega)}}^{n}
%\end{align*}
%%\underline{\phantom{xxxxxxxxxxxxxxxxx}}
%%\ml
%%\tiny{$(x)_+ := \max\{0, x\}$.} 
%%\ml
%%\tiny{$\Gamma$: Euler Gamma function. $\Gamma (n) = (n+1)!$.} 
%\end{frame}

The next result justifies the term B-spline with B standing for basis. For a proof, see for instance \cite{deboor}.
\begin{proposition}
Every cardinal spline function $s:[a,b]\to$ of order $n$ has a unique representation in terms of a finite shifted sequence of cardinal B-splines of order $n$:
\[
s(x) = \sum\limits_{j=-n+1}^k c_j B_n (x-j),
\]
where $c_j\in \R$.
\end{proposition}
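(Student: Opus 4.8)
The plan is to show that the $n+k$ functions $B_n(\mydot-j)$, $-n+1\le j\le k$, restricted to $[a,b]$ constitute a basis of $\mathcal{S}_{\cX,n}$. Since this space was already noted to have dimension $n+k$, and the proposed spanning set has exactly $n+k$ members, it suffices to check two things: that each $B_n(\mydot-j)$ lies in $\mathcal{S}_{\cX,n}$, and that the collection is linearly independent on $[a,b]$. Linear independence of $n+k$ vectors in an $(n+k)$-dimensional space makes them a basis, whence every $s$ admits a representation of the displayed form and it is unique.

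For membership, I would normalize the cardinal knot set by an integer translation so that $x_0=a=0$, $x_i=i$ and $x_{k+1}=b=k+1$. By \eqref{eq2.3a} (equivalently by the recursion \eqref{eq1.1}), $B_n$ is a piecewise polynomial of degree at most $n-1$ whose breakpoints lie in $\Z$, and as recorded after \eqref{eq1.1} one has $B_n\in C^{n-2}$. An integer shift preserves the degree bound and the $C^{n-2}$ regularity and keeps the breakpoints on $\Z$, so the restriction of each $B_n(\mydot-j)$ to $[0,k+1]$ satisfies conditions (i) and (ii) of the definition and therefore belongs to $\mathcal{S}_{\cX,n}$. Because $\supp B_n(\mydot-j)=[j,j+n]$, the shifts whose support meets $(0,k+1)$ are precisely those with $-n+1\le j\le k$, which accounts for all $n+k$ indices in the sum.

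For linear independence, suppose $\sum_{j=-n+1}^{k}c_j\,B_n(\mydot-j)\equiv 0$ on $[0,k+1]$. I would argue by peeling coefficients off using the staggered supports. On the leftmost subinterval $[0,1)$ the only nonzero shifts are the $n$ functions with $-n+1\le j\le 0$; their restrictions are $n$ overlapping polynomial pieces of $B_n$ of degree $n-1$, and vanishing of the sum forces $c_{-n+1},\dots,c_0$ all to be zero once these pieces are known to be linearly independent. With the boundary coefficients eliminated, the remaining sum $\sum_{j=1}^{k}c_j B_n(\mydot-j)$ can be treated by the standard leftmost-index argument: if $j_0\ge 1$ were the smallest index with $c_{j_0}\neq 0$, then on $(j_0,j_0+1)\subseteq(0,k+1)$ all shifts of strictly smaller index vanish, leaving $c_{j_0}B_n(x-j_0)=\tfrac{c_{j_0}}{\Gamma(n)}(x-j_0)^{n-1}\equiv 0$, which is impossible unless $c_{j_0}=0$. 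Iterating yields $c_j=0$ for every $j$.

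I expect the main obstacle to be the first step, namely the local linear independence of the $n$ polynomial pieces of $B_n$ that overlap the initial subinterval. Unlike the interior intervals, where the leftmost-index trick isolates a single nonzero spline, the boundary shifts $B_n(\mydot-j)$ with $-n+1\le j\le -1$ protrude to the left of $[0,b]$ and overlap one another, so their independence on $[0,1)$ must be established separately; this is precisely the classical fact that a B-spline is \emph{locally linearly independent}, provable for instance from the nonvanishing jump $(-1)^m\binom{n}{m}\neq 0$ of $B_n^{(n-1)}$ across each breakpoint $x=m$, which successively distinguishes the pieces. Once this is granted, the remaining steps are routine, and I would take the dimension formula $\dim\mathcal{S}_{\cX,n}=n+k$ from the earlier discussion rather than re-derive it.
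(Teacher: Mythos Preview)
The paper does not supply its own proof of this proposition; it simply refers the reader to de Boor's book with the remark ``For a proof, see for instance \cite{deboor}.'' There is therefore nothing to compare against at the level of argument.

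Your outline is the standard linear-algebra route---show the $n+k$ shifts lie in $\mathcal{S}_{\cX,n}$, show they are linearly independent, invoke the dimension count---and it is correct. The leftmost-index peeling for the interior shifts $j\ge 1$ works exactly as you describe, since on $(j_0,j_0+1)$ the shifts with $j>j_0$ have support beginning at $j\ge j_0+1$ and hence vanish there, while those with $j<j_0$ carry zero coefficients by minimality and by the boundary step.

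You are also right that the genuine content sits in the boundary step: the $n$ shifts $B_n(\mydot-j)$, $-n+1\le j\le 0$, all overlap $[0,1)$, and their local linear independence there is not automatic. Your proposed route via the jumps $(-1)^m\binom{n}{m}$ of $B_n^{(n-1)}$ can be made to work (it amounts to observing that the successive polynomial pieces of $B_n$ differ by nonzero multiples of $(x-m)^{n-1}$, which sets up a triangular system), but the treatment in de Boor via dual functionals or Marsden's identity is cleaner and handles the boundary and interior uniformly. Either way, once local linear independence is granted, the rest of your argument goes through.
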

Hence, investigating properties of splines reduces to those of B-splines.
\ml
\textbf{Terminology.} As we are dealing exclusively with cardinal splines and B-splines in the remainder of this paper, we will drop the adjective ``cardinal.''
\subsection{Some properties of polynomial B-splines}\label{sect1.1}
The polynomial B-splines enjoy among others the following properties.
\begin{enumerate}
\item[(i)] \textit{Recursion Relation}:
\[
\forall n\in \N\;\forall x\in \R: B_n (x) = \frac{x}{n-1} B_{n-1} (x) + \frac{n-x}{n-1} B_{n-1}(x-1)
\]
\item[(ii)] \textit{Convolution Relation}:
\[
\forall m, n\in \N:\quad B_m \ast B_n = B_{m+n}
\]
\item[(iii)] \textit{Convergence to Gaussians}: As $n\to\infty$, $B_n$ converges  in $L^p$-norm, $2\leq p \leq \infty$, to a modulated Gaussian.
\item[(iv)] The \textit{Error of Approximation} for an $f\in C^n [a,b]$ on a uniform grid of mesh size $h$ by polynomial B-splines of order $n$ is $\mathcal{O}(h^n)$.
\end{enumerate}
The interested reader may consult the extensive literature on $B$-splines to learn about additional properties of this important family of functions in approximation theory.
%

%%%%%%%%%%%%%%%%%%%%%%%
\subsection{Polynomial B-splines of complex orders}
%%%%%%%%%%%%%%%%%%%%%%%

Both the first and second obstacle of polynomial B-splines mentioned in the introduction can be overcome by extending them to include complex orders (or complex degrees). This can be done in the Fourier domain as follows. (Cf. \cite{fub}.)
\begin{definition}
Suppose $z\in \C$ with $\re z > 1$. The B-spline of complex order $z$, for short complex B-spline, is given by $\widehat{B}: \R\to\C$, 
\be
\widehat{B}_z (\omega) := \left( \frac{1-e^{-i\omega}}{i\omega}\right)^z,
\ee
or more precisely,
\be\label{eq2.2}
\widehat{B}_z(\omega) = \underbrace{\widehat{B}_{\re z}(\omega)}_{\footnotesize{\textrm{continuous smoothness}}}\, \underbrace{e^{i \im z \ln \Omega (\omega)}}_{\footnotesize{\textrm{phase}}} \, \underbrace{e^{- \im z \arg \Omega(\omega)}}_{\footnotesize{\textrm{modulation}}},
\ee
where $\Omega(\omega) := \frac{1-e^{-i\omega}}{i\omega}$.
\end{definition}
\noindent
We remark that $\widehat{B}_z$ is well-defined as $\gr \Omega$ does not intersect the real axis.

The first factor in the product appearing in \eqref{eq2.2} is the Fourier transform of a so-called \emph{fractional B-spline} \cite{ub}. 
%These fractional B-splines enjoy the following properties which the interested reader can find in \cite{ub}. Set $\alpha := \re z$.
%\begin{enumerate}
%\item The support of $B_\alpha$ is the half-line $[0, \infty)$. 
%\item \textit{Decay:} $B_\alpha \in \cO(x^{-\alpha})$ as $x\to \infty$.
%\item $B_\alpha\in L^1(\R)$ for $\alpha > 1$ and $B_\alpha\in L^2(\R)$ for $\alpha > \frac12$.
%\item $B_\alpha$ is $(\alpha-1)$-H\"older continuous $(\alpha > 1)$.
%\item $B_\alpha$ reproduces polynomials up to order $\lceil\alpha\rceil$.
%\item Properties (i) -- (iii) from Section~\ref{sect1.1} are preserved under the change to real orders.
%\end{enumerate}
Some graphs of such B-splines of real order are depicted in Figure \ref{fig2}.
\begin{figure}[h!]
\begin{center}
\includegraphics[width=5cm,height=3cm]{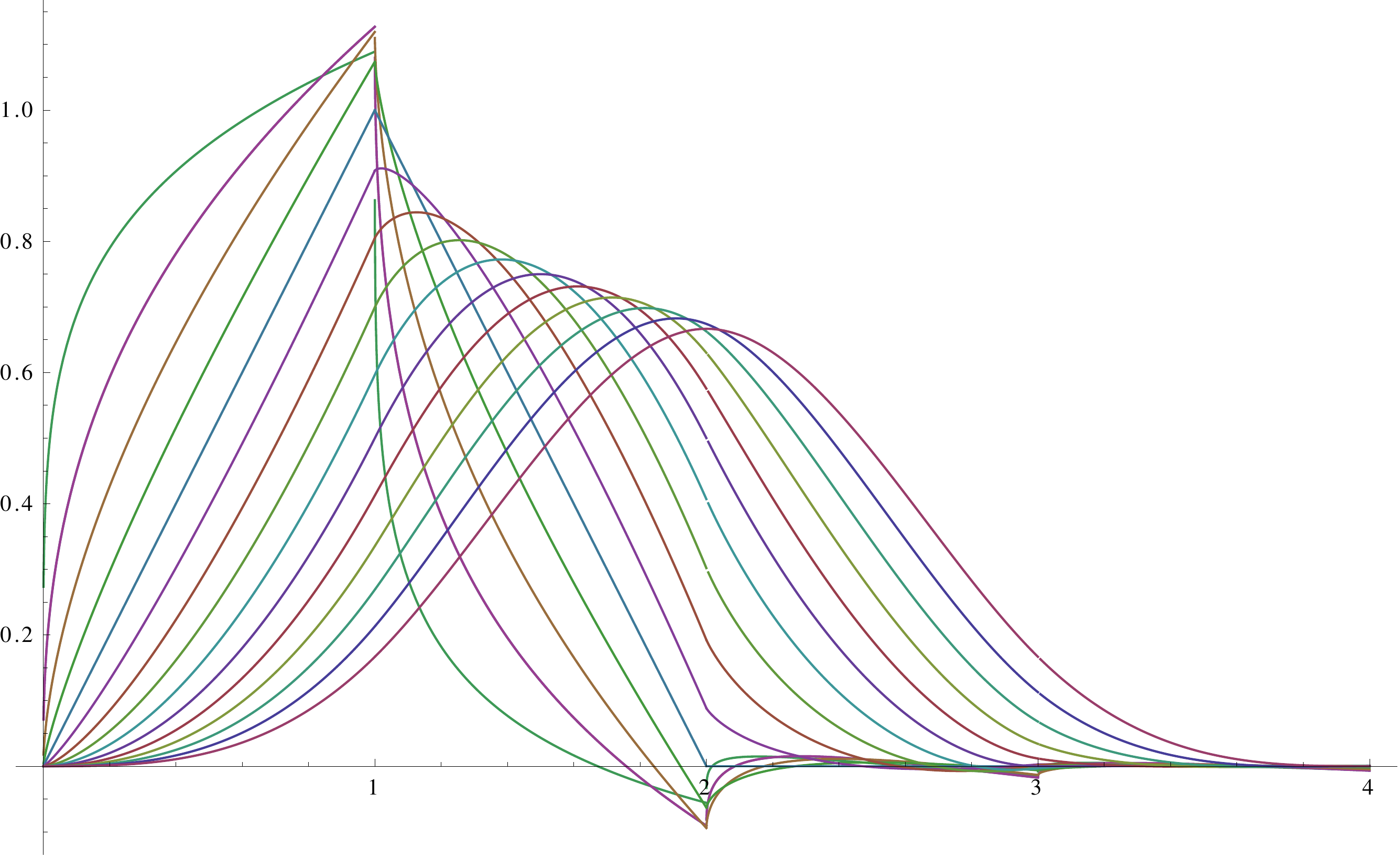}
\caption{A family of B-splines of real order for $\alpha = 0.6 + m\cdot 0.2$, $m = 1, \ldots, 17$.}\label{fig2}
\end{center}
\end{figure}
The second and third factors in \eqref{eq2.2} are a modulating and a damping factor. The presence of the imaginary part $\im z$ causes the frequency components on the negative and positive real axis to be enhanced with different signs. This has the effect of shifting the frequency spectrum towards the negative or positive frequency side, depending on the sign of $\im z$. The corresponding bases can be interpreted as approximate single-band filters \cite{fub}.

The time domain representation of a complex B-spline was derived in \cite{fub} and is given in the next theorem.

\begin{theorem}[Time domain representation]
\be\label{eq2.3}
B_z (x) = \frac{1}{\Gamma(z)} \sum_{k= 0}^\infty (-1)^k \left( {z} \atop {k}\right) (x-k)_+^{z-1},\quad \re z > 1.
\ee
Equality holds point-wise for all $x\in\R$ and in the $L^2(\R)$--norm.
\end{theorem}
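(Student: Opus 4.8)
The plan is to invert the Fourier representation $\wh{B}_z(\omega) = \Omega(\omega)^z = \left(\frac{1-e^{-i\omega}}{i\omega}\right)^z$ directly, reducing the computation to the inverse transform of the elementary factor $(i\omega)^{-z}$ after expanding the numerator $(1-e^{-i\omega})^z$ by the generalized binomial theorem. A preliminary observation fixes the functional setting: since $|\Omega(\omega)| \le 2/|\omega|$ for large $|\omega|$ while the modulation factor stays bounded, one has $|\wh{B}_z(\omega)| \le C\,|\omega|^{-\re z}$, so $\re z > 1$ forces $\wh{B}_z \in L^1(\R) \cap L^2(\R)$. Consequently $B_z := \cF^{-1}\wh{B}_z$ is a continuous bounded function, and the strategy is to establish the claimed identity first in $L^2(\R)$ and then upgrade it to a genuine pointwise statement at the very end.

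First I would record an auxiliary transform pair, regularized by a factor $e^{-\eps x}$ to keep everything in $L^1$. For $\re z > 0$ and $\eps > 0$, the Gamma integral gives
\[
\int_0^\infty \frac{x^{z-1}}{\Gamma(z)}\, e^{-(\eps + i\omega)x}\,dx = \frac{1}{(\eps + i\omega)^z},
\]
where the power is taken in the principal branch; combining this with the shift rule $\cF[f(\mydot - k)](\omega) = e^{-ik\omega}\,\cF f(\omega)$ and the substitution $u = x-k$ yields
\[
\cF\Bigl[e^{-\eps x}\,\tfrac{(x-k)_+^{z-1}}{\Gamma(z)}\Bigr](\omega) = e^{-(\eps+i\omega)k}\,(\eps+i\omega)^{-z}.
\]

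Next I would expand the numerator. For $\re z > 0$ the series $(1 - w)^z = \sum_{k=0}^\infty (-1)^k \binom{z}{k} w^k$ converges absolutely and uniformly on the closed unit disc, since $\bigl|\binom{z}{k}\bigr| = \cO(k^{-\re z - 1})$. Applying this with $w = e^{-(\eps + i\omega)}$, summing the term-by-term transforms above, and interchanging summation and integration (legitimate because the pointwise bound $|g(x)| \le C\,x^{\re z-1}$ derived below makes the whole family dominated by $e^{-\eps x}\,x^{\re z-1} \in L^1$), I obtain
\[
\cF\Bigl[\tfrac{e^{-\eps x}}{\Gamma(z)} \sum_{k=0}^\infty (-1)^k \binom{z}{k} (x-k)_+^{z-1}\Bigr](\omega) = \Bigl(\frac{1 - e^{-(\eps + i\omega)}}{\eps + i\omega}\Bigr)^z .
\]
As $\eps \to 0^+$ the right-hand side converges pointwise to $\wh{B}_z(\omega)$, which identifies the time-domain series with $B_z$. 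Note that, for each fixed $x$, the inner sum $g(x) := \frac{1}{\Gamma(z)}\sum_{k=0}^\infty (-1)^k \binom{z}{k}(x-k)_+^{z-1}$ is in fact \emph{finite}: the terms with $k \ge x$ vanish because $\re(z-1) > 0$. Bounding $(x-k)^{\re z-1} \le x^{\re z-1}$ and $\sum_k \bigl|\binom{z}{k}\bigr| < \infty$ gives $|g(x)| \le C\,x^{\re z-1}$, confirming both the domination used above and that $g$ is a tempered distribution.

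The main obstacle is the rigorous interchange of the limit $\eps \to 0$ with the Fourier transform and pinning down the mode of convergence. I expect to handle this by the polynomial-growth bound just noted: $e^{-\eps x} g \to g$ in the sense of tempered distributions, whence $\cF[e^{-\eps x}g] \to \cF g$ distributionally, and matching this with the pointwise limit $\wh{B}_z$ forces $\cF g = \wh{B}_z$, i.e.\ $g = B_z$ in $L^2(\R)$. Finally, since $\re z > 1$ makes both $B_z$ (via $\wh{B}_z \in L^1$) and $g$ (via continuity of $t \mapsto t_+^{z-1}$ when $\re(z-1) > 0$) continuous, the almost-everywhere equality forces equality at every $x \in \R$, giving the pointwise assertion. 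A subsidiary point requiring care is the consistency of the branch of $\Omega^z$ with the factorization $(1-e^{-i\omega})^z\,(i\omega)^{-z}$; this is ensured by the earlier remark that $\gr\Omega$ does not intersect the real axis.
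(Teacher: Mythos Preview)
The paper does not prove this theorem; it simply quotes the result from \cite{fub}. Your argument---regularize by $e^{-\eps x}$, expand $(1-e^{-(\eps+i\omega)})^z$ via the generalized binomial theorem, invert term by term using the Gamma integral, and let $\eps\to 0^+$ in $\cS'$---is sound and is essentially the standard derivation found in that reference. The one step worth making more explicit is the branch identity $(1-e^{-(\eps+i\omega)})^z\,(\eps+i\omega)^{-z}=\bigl(\tfrac{1-e^{-(\eps+i\omega)}}{\eps+i\omega}\bigr)^{z}$: it holds because for $\eps>0$ both $1-e^{-(\eps+i\omega)}$ and $\eps+i\omega$ have strictly positive real part, so their principal arguments lie in $(-\tfrac{\pi}{2},\tfrac{\pi}{2})$ and no $2\pi$-jump occurs when forming the quotient; the limit $\eps\to 0$ then follows by continuity on the cut plane.
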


Complex B-splines enjoy among others the following properties.
\begin{enumerate}
\item $B_z\in L^1 (\mathbb{R})\cap L^2 (\mathbb{R})$, $\re z > 1$.
\item $\displaystyle{\int_{\R}} B_z (x) dx = \widehat{B}_z (0) = 1$.
\item $B_z\in W^{p,2}(\mathbb{R})$ for $p < \re z - \frac{1}{2}$.
\item $B_z (x) = \mathcal{O}(|x|^{-m})$, for $m < \re z + 1$ and $|x|\to \infty$.
\item $B_z$ converges in $L^p$-norm, $2\leq p \leq \infty$, to a modulated and shifted Gaussian as $\re z \to \infty$.
\item $B_{\re z}$ reproduces polynomials up to order $\lceil \re z\rceil$.
\item For ${\re z} > 1$, $B_{\re z}$ is $({\re z}-1)$-H\"older continuous.
\item $\{B_z(\cdot - k)\}_{k\in \Z}$ is a Riesz sequence in $L^2(\R)$. This allows the construction of spline scaling functions and spline wavelets of complex order.
\end{enumerate}
\noindent
Some graphical examples of complex polynomial B-splines are shown in Figure \ref{fig3}.
\begin{figure}[h!]
\begin{center}
\includegraphics[width = 4cm, height = 2.5cm]{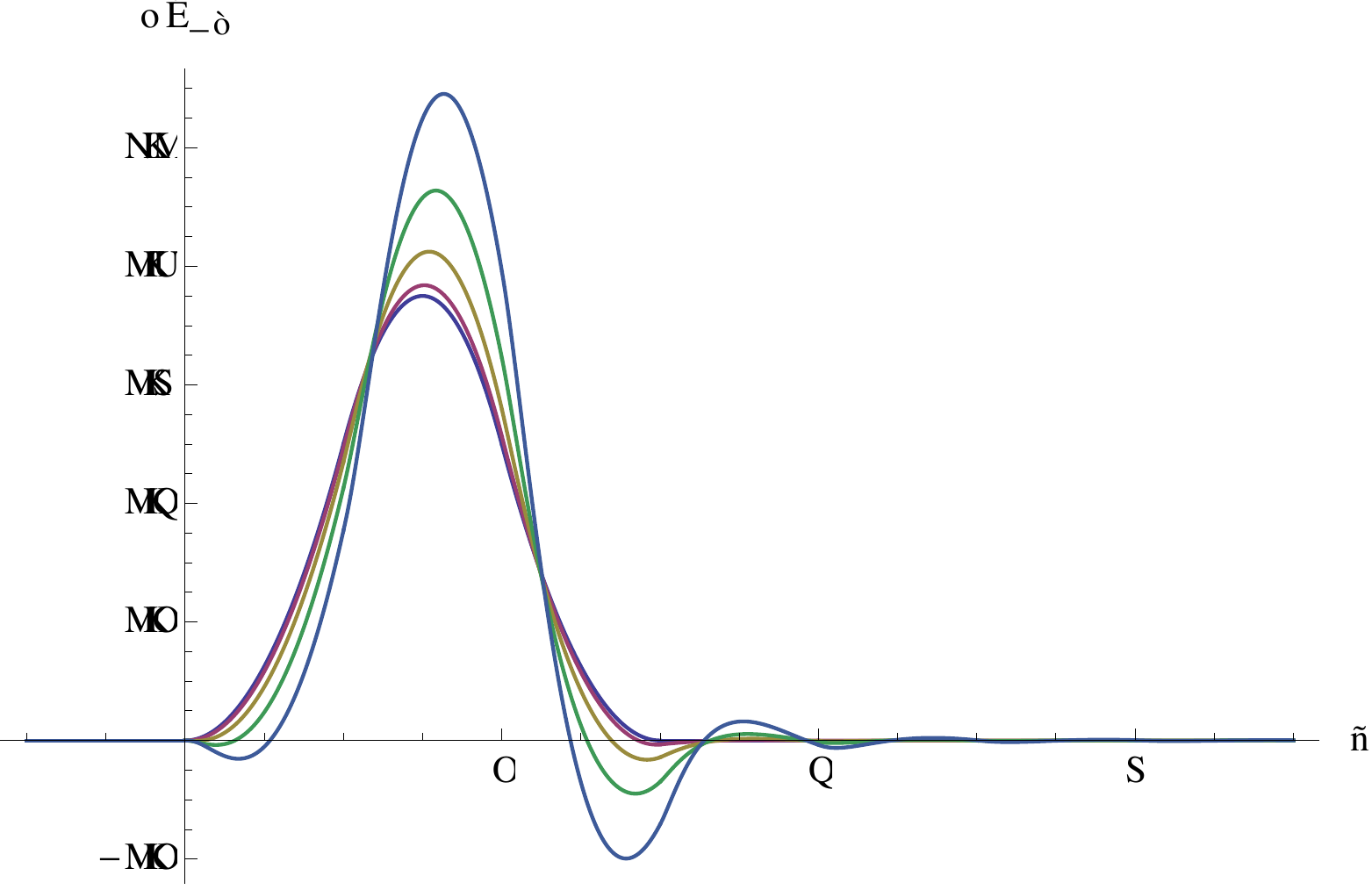}\hspace{1.5cm}
\includegraphics[width = 4cm, height = 2.5cm]{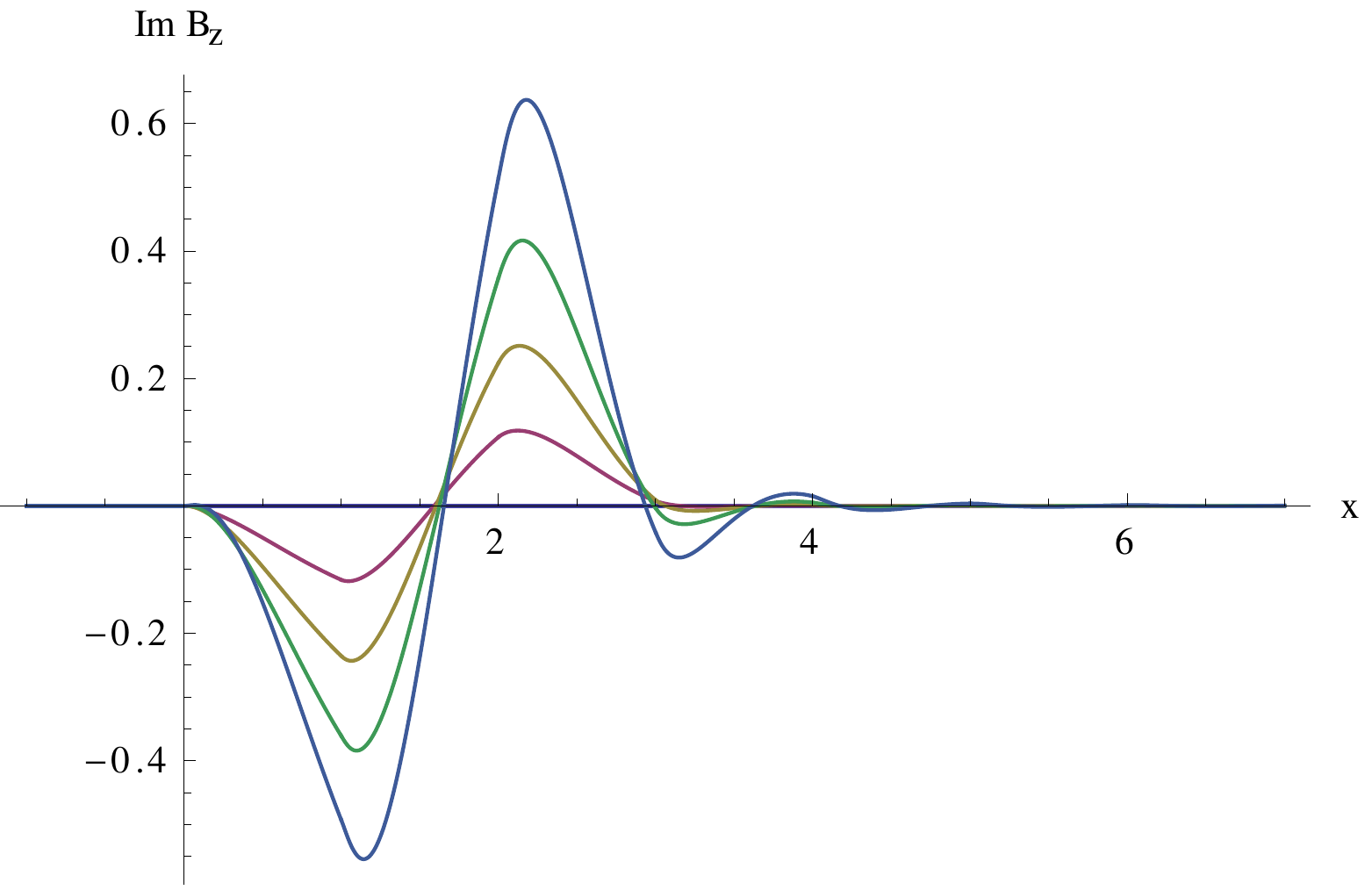}\\
\includegraphics[width = 5cm, height = 3cm]{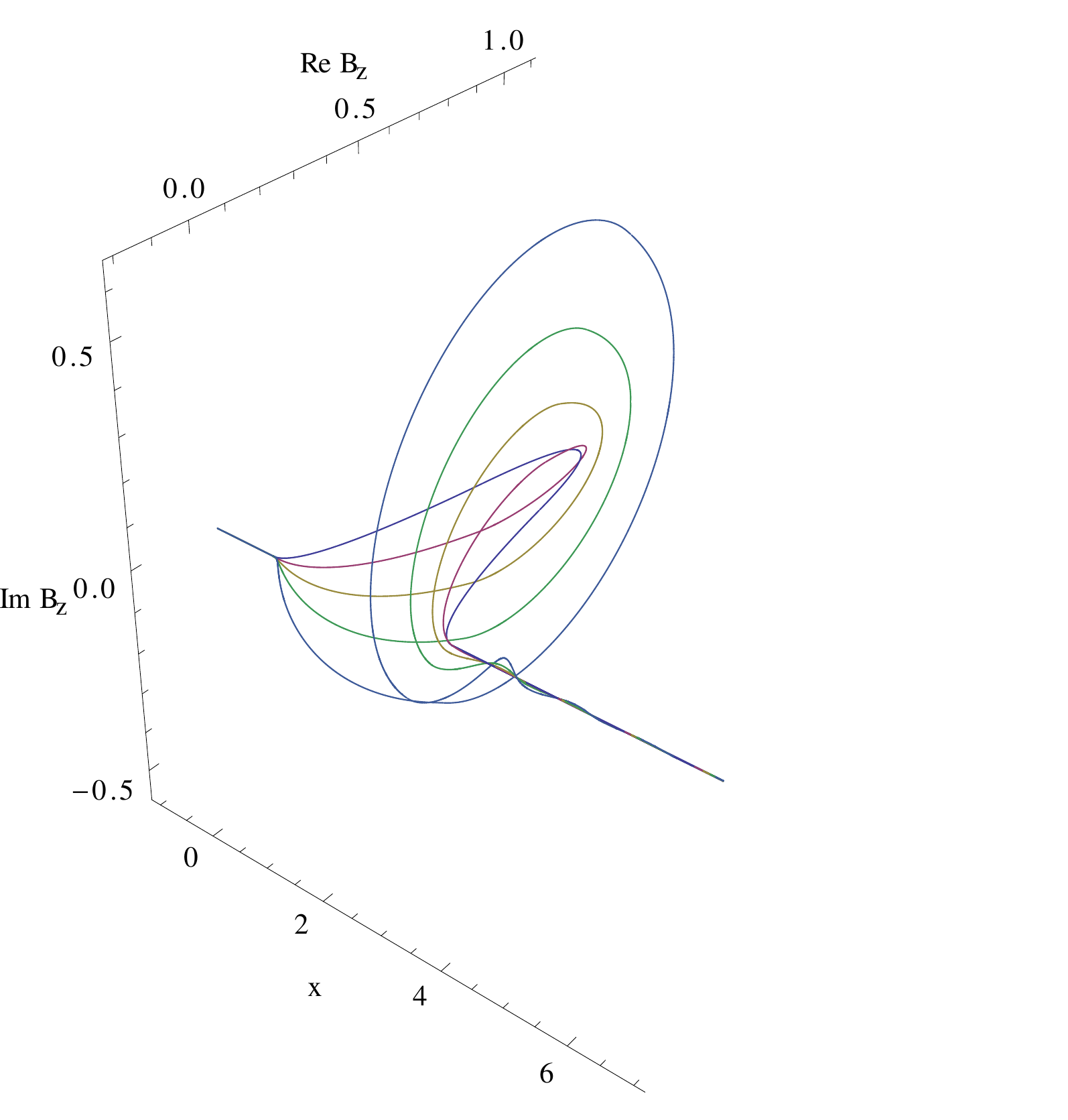}
\caption{$B_z$ for $z = (3 + \frac{k}{4}) + i$, $k = 0, 1, \ldots, 4$.}\label{fig3}
\end{center}
\end{figure}

In summary, complex B-splines are a continuous two-parameter family of functions which enjoy the properties:
\begin{enumerate}
\item[(a)] $\re z> 1$ gives a {continuous} family of functions of increasing smoothness $\re z$;
\item[(b)] $\im z$ contains phase information and can be used to describe and resolve singularities in signals and images.
\end{enumerate}

\section{Exponential B-Splines}\label{expbsplines}
%%%%%%%%%%%%%%%%%%%%%%%

Exponential B-splines can be used to interpolate or approximate data that exhibit sudden growth or decay and for which polynomial B-splines are not well-suited because of their oscillatory behavior near the points where the sudden growth or decay occurs \cite{sb}. The interested reader is referred to the following albeit incomplete list of references on exponential B-splines \cite{dm1,mccartin,spaeth,unserblu05,zoppou}.

To define the class of exponential B-splines, let $N\in \mn$ and let $\ba := (a_1, \ldots, a_N)$, where $a_1,\dots,
a_N\in \mr$ with $a_i\neq 0$ for at least one $i\in \N_N$.

\begin{definition}
An exponential B-spline $E_{N,\ba}:\mr\to \mr$ of order $N$ for the
$N$-tuple $\ba$ is a function of the form
\[
{E}_{N} := {E}_{N,\ba} :=  \underset{k = 1}{\overset{N}{*}} e^{a_k(\cdot)}\chi.
\]
\end{definition}
To simplify notation, we set $\eps^{a (\cdot)} := e^{a (\cdot)}\chi$. A closed formula for $E_n$ was derived in \cite{cm}. Note that $\supp E_N = [0,N]$, $N\in \N$.

%
%\subsection{Exponential B-Splines and Differential Operators}
%\nl
%Regard $\eps^{a(\cdot)}\in L^1_{\loc}$ as a regular distribution.
%\begin{align*}
%\inn{D\eps^{a(\cdot)}}{\varphi} & = - \inn{\eps^{a(\cdot)}}{D\varphi} = - \int_0^1 e^{a x} D\varphi (x) dx \\
%& = a\left(\varphi (0) - e^a \varphi (1)\right) + a\, \inn{\eps^{a (\cdot)}}{\varphi}\\
%& = a \inn{\underbrace{\delta - e^a\, \delta (\cdot - 1)}_{\text{exponential difference operator } \nabla^{\exp}_a\,\delta}}{\varphi} + a\, \inn{\eps^{a (\cdot)}}{\varphi}\\
%\end{align*}
%\vspace*{-20pt}
%\[
%{(D - a I )\, \eps^{a (\cdot)} = a\, \nabla^{\exp}_a \,\delta}
%\]
%In general,
%\[
%\prod_{k=1}^n (D - a_k I) f = \sum_{k=1}^n b_k\,\delta (\cdot - k), \quad b_k\in \mr.
%\]

%
For any $a\in \R$, the Fourier transform of $\eps^{-a(\cdot)}$ is given by 
\[
\mathcal{F}( \eps^{-a(\cdot)})(\omega) = \frac{1- e^{-a} e^{-i \omega}}{i \omega + a}.
\]
and, therefore,
\be\label{eq5.1}
\mathcal{F}({E}_n) (\omega) = \prod_{k=1}^n \frac{1- e^{-a_k} e^{-i \omega}}{i \omega + a_k}\;\;\overset{a_k = a}{=}\;\;\left(\frac{1- e^{-a} e^{-i \omega}}{i \omega + a}\right)^n.
\ee

%\begin{lemma}
%Let $f,g\in \ltr$ be real-valued functions with compact support. Assume that 
%\begin{itemize} 
%\item[(i)] $\displaystyle{\sum_{k\in \mz}}\, g(x-k)=1,$ \ a.e. $x\in \mr;$ 
%\item[(ii)] $\displaystyle{\inr}\, f(x) \,dx=1.$ 
%\end{itemize}
%Then $f*g$ is a compactly supported function for which 
%\bes
%\sum_{k\in \mz} (f*g)(x-k)=1, \quad x\in \mr.
%\ens 
%\end{lemma}

%
\subsection{Exponential B-splines of complex order}

Let $z\in \C_{>1} := \{\zeta\in \C : \re \zeta > 1\}$ and $a > 0$. Taking the left-hand-side of \eqref{eq5.1} as a starting point, we define an exponential B-spline of complex order $z$, for short complex exponential B-spline, by (see \cite{m})
\begin{align}\label{eq5.2}
\widehat{E}_{z,a} (\omega) &:=\left(\frac{1-e^{-(a+i\omega)}}{a+i\omega}\right)^z\nonumber \\
& = \widehat{E}_{\re z, a} (\omega)\, e^{i\,\Omega_a (\omega)\, \im z}\, e^{- \arg\Omega_a (\omega)\, \im z },
\end{align}
where $\Omega_a(\omega) := \frac{1-e^{-(a+i\omega)}}{a+i\omega}$. An investigation of the function $\Omega_a:\R\to\C$ shows that 
$\widehat{E}_{z,a}$ is well-defined only if $a > 0$. (See \cite{m}.) The second and third terms in the product of \eqref{eq5.2} play the same role as they did in the case of complex polynomial B-splines. 

Using properties of the exponential difference operator and the definition of $E_{z,a}$, the following time domain representation of $E_{z,a}$ was proved in \cite{m}.
\begin{theorem}
Suppose $z\in \C_{>1}$ and $a > 0$. Then,
\[
E_{z,a} (x) = \frac{1}{\Gamma(z)}\,\sum_{k=0}^\infty (-1)^k\,\binom{z}{k}\, e^{-k a} e_+^{-a(x-k)}\,(x-k)_+^{z-1},
\]
where $e_+^{(\cdot)} := \chi_{[0,\infty)}\,e^{(\cdot)}$. The sum converges both point-wise in $\R$ and in the $L^2$--sense.
\end{theorem}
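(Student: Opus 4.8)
The plan is to reduce everything to the polynomial case already settled in the preceding theorem, by exploiting the simple relation between $\widehat{E}_{z,a}$ and $\widehat{B}_z$. Observe that $a+i\omega = i(\omega-ia)$ and $e^{-(a+i\omega)} = e^{-i(\omega-ia)}$, so that $\Omega_a(\omega) = \Omega(\omega-ia)$ and hence
\[
\widehat{E}_{z,a}(\omega) = \Omega(\omega-ia)^z = \widehat{B}_z(\omega-ia).
\]
A shift of the argument of $\widehat{B}_z$ by the imaginary amount $-ia$ is, on the time-domain side, a modulation by $e^{-ax}$: for $g(x):=e^{-ax}B_z(x)$ one computes $\widehat{g}(\omega) = \int_\R B_z(x)\,e^{-(a+i\omega)x}\,dx = \widehat{B}_z(\omega-ia)$. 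Since $B_z\in L^1(\R)$ is supported on $[0,\infty)$ and $e^{-ax}\le 1$ there, this integral converges, and comparison with the display gives the key identity $E_{z,a}(x) = e^{-ax}B_z(x)$.

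First I would insert the polynomial time-domain representation from the previous theorem and redistribute the exponential factor termwise. Writing $e^{-ax} = e^{-ka}e^{-a(x-k)}$ and noting that $(x-k)_+^{z-1}$ vanishes for $x\le k$ while $e_+^{-a(x-k)} = e^{-a(x-k)}$ on $\{x>k\}$, each summand obeys $e^{-ax}(x-k)_+^{z-1} = e^{-ka}\,e_+^{-a(x-k)}\,(x-k)_+^{z-1}$. Consequently
\[
E_{z,a}(x) = \frac{e^{-ax}}{\Gamma(z)}\sum_{k=0}^\infty (-1)^k\binom{z}{k}(x-k)_+^{z-1} = \frac{1}{\Gamma(z)}\sum_{k=0}^\infty (-1)^k\binom{z}{k}\,e^{-ka}\,e_+^{-a(x-k)}\,(x-k)_+^{z-1},
\]
which is exactly the claimed formula.

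For the two convergence statements I would pass them through the same modulation. Pointwise convergence is immediate: for fixed $x$ the factor $e^{-ax}$ is a constant, so the partial sums of the exponential series converge to $e^{-ax}B_z(x)=E_{z,a}(x)$ precisely because those of the polynomial series converge to $B_z(x)$. For the $L^2$ statement I would use that multiplication by $e^{-ax}$ is a contraction on $L^2[0,\infty)$ (as $0<e^{-ax}\le 1$ for $x\ge0$, $a>0$) and that every term, hence every partial sum, is supported in $[0,\infty)$; the $n$-th exponential partial sum is exactly $e^{-ax}$ times the $n$-th polynomial partial sum, so $\|S_n^{\mathrm{exp}}-E_{z,a}\|_{L^2}\le\|S_n^{\mathrm{poly}}-B_z\|_{L^2}\to 0$.

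The one genuinely delicate point is the justification of the complex frequency shift $\widehat{E}_{z,a}(\omega)=\widehat{B}_z(\omega-ia)$, i.e. that the Fourier--Laplace transform $\zeta\mapsto\int_0^\infty B_z(x)e^{-i\zeta x}dx$ really equals $\Omega(\zeta)^z$ at $\zeta=\omega-ia$. This transform is holomorphic on the lower half-plane $\{\im\zeta<0\}$ (there $e^{(\im\zeta)x}$ decays on the support), agrees with $\Omega(\cdot)^z$ on $\R$, and the hypothesis $a>0$ is exactly what keeps $\zeta=\omega-ia$ inside the region where the branch of $\Omega^z$ used in \eqref{eq5.2} is single-valued and holomorphic (the analogue of the remark that $\gr\Omega$ does not meet the real axis); uniqueness of analytic continuation then closes the gap. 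An alternative, closer to the exponential-difference-operator argument alluded to above, is to invert $\widehat{E}_{z,a}$ directly: expand $(1-e^{-a}e^{-i\omega})^z=\sum_k\binom{z}{k}(-1)^k e^{-ka}e^{-ik\omega}$ by the generalized binomial theorem and recognize $(a+i\omega)^{-z}$ as the Fourier transform of $\tfrac{1}{\Gamma(z)}e_+^{-ax}x_+^{z-1}$, so that termwise inversion produces the shifts by $k$ and the stated series. I expect the analytic-continuation bookkeeping (or, in the alternative, the uniform control of the binomial series needed to justify termwise inversion) to be the main technical hurdle, everything else being algebraic.
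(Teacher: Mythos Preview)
Your argument is sound. Note, though, that the paper does not actually give a proof here: it cites \cite{m} and says only that the result is obtained ``using properties of the exponential difference operator and the definition of $E_{z,a}$.'' That description matches the \emph{alternative} you sketch at the end---expand $(1-e^{-a}e^{-i\omega})^z$ binomially, identify $(a+i\omega)^{-z}$ with $\cF\bigl(\tfrac{1}{\Gamma(z)}\,e_+^{-a(\cdot)}(\cdot)_+^{z-1}\bigr)$, and invert termwise---rather than your primary route.

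Your main approach is genuinely different and more economical: the modulation identity $E_{z,a}(x)=e^{-ax}B_z(x)$, obtained from $\widehat{E}_{z,a}(\omega)=\widehat{B}_z(\omega-ia)$ by analytically continuing the Fourier--Laplace transform of $B_z$ into the closed lower half-plane, lets you import the polynomial theorem wholesale instead of rerunning its proof with $i\omega$ replaced by $a+i\omega$. The cost, as you correctly flag, is the branch bookkeeping needed to match the principal determinations of $\Omega(\cdot)^z$ and $\Omega_a(\cdot)^z$ across that continuation. One caveat on the $L^2$ step: your contraction bound $\|S_n^{\mathrm{exp}}-E_{z,a}\|_{L^2}\le\|S_n^{\mathrm{poly}}-B_z\|_{L^2}$ is informative only if the right-hand side is finite, yet the individual polynomial terms $(x-k)_+^{z-1}$ (and hence the polynomial partial sums) are not in $L^2(\R)$; whatever precise sense of $L^2$ convergence is meant in the polynomial theorem must therefore be unpacked before the inequality can be invoked. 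In the direct exponential-difference approach this wrinkle disappears, since the damping factor $e_+^{-a(x-k)}$ puts each term of the exponential series in $L^2$ outright---a small respect in which the cited method is cleaner than the reduction.
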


The figures below depicts some graphs of exponential B-splines of complex order.
\begin{figure}[h!]
\begin{center}
\includegraphics[width = 4cm, height = 2.5cm]{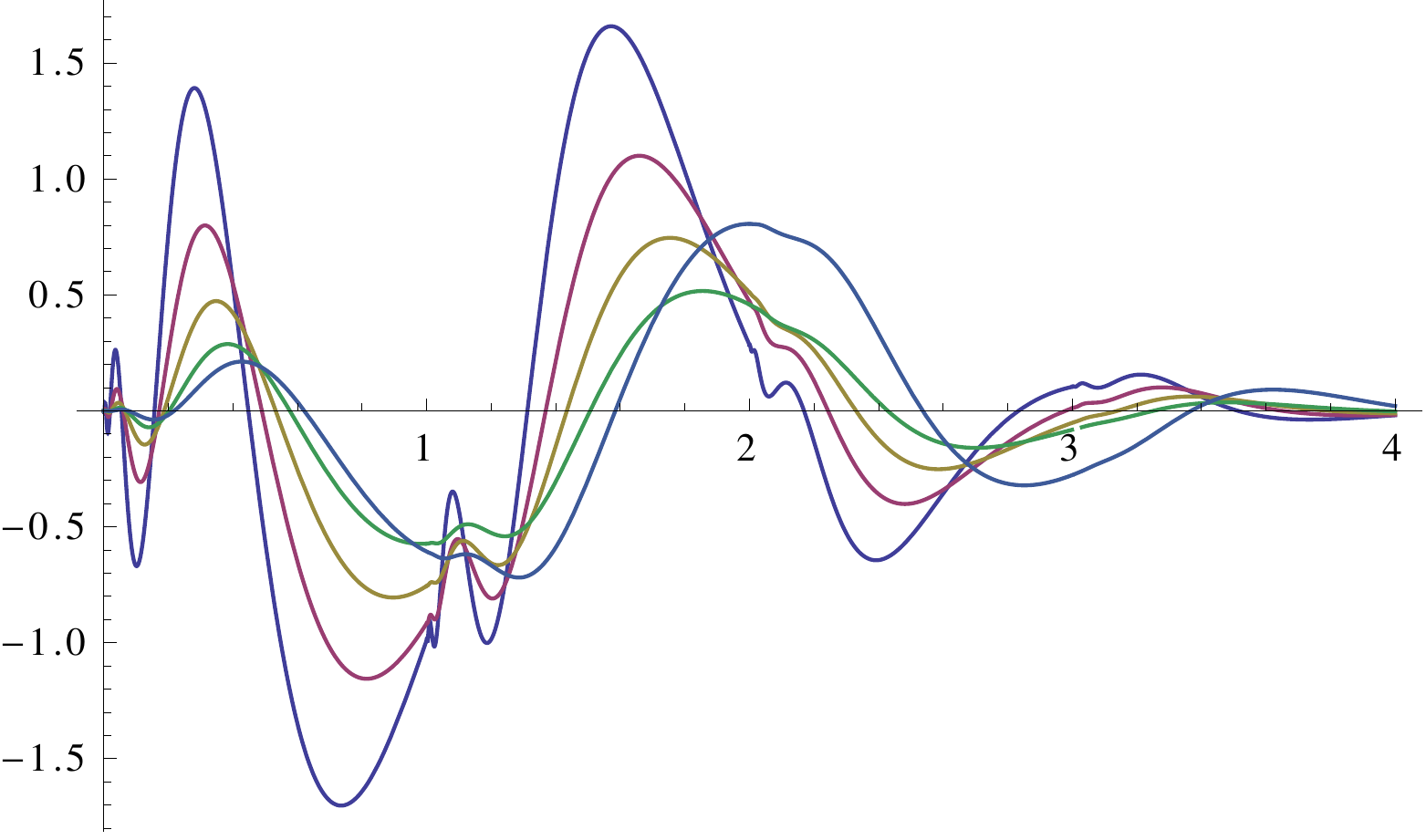}\hspace{1.5cm}
\includegraphics[width = 4cm, height = 2.5cm]{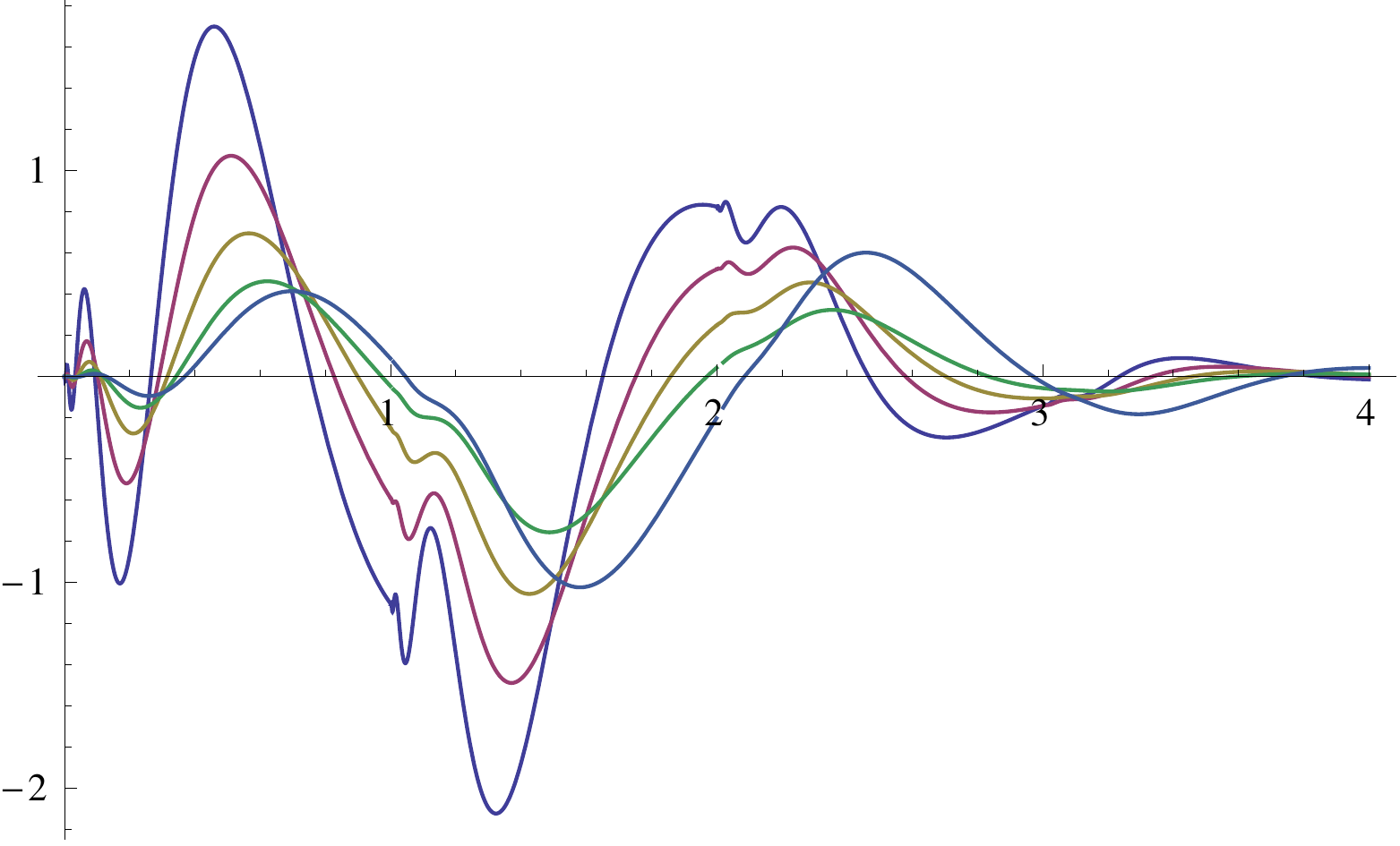}\\
\includegraphics[width = 5cm, height = 3cm]{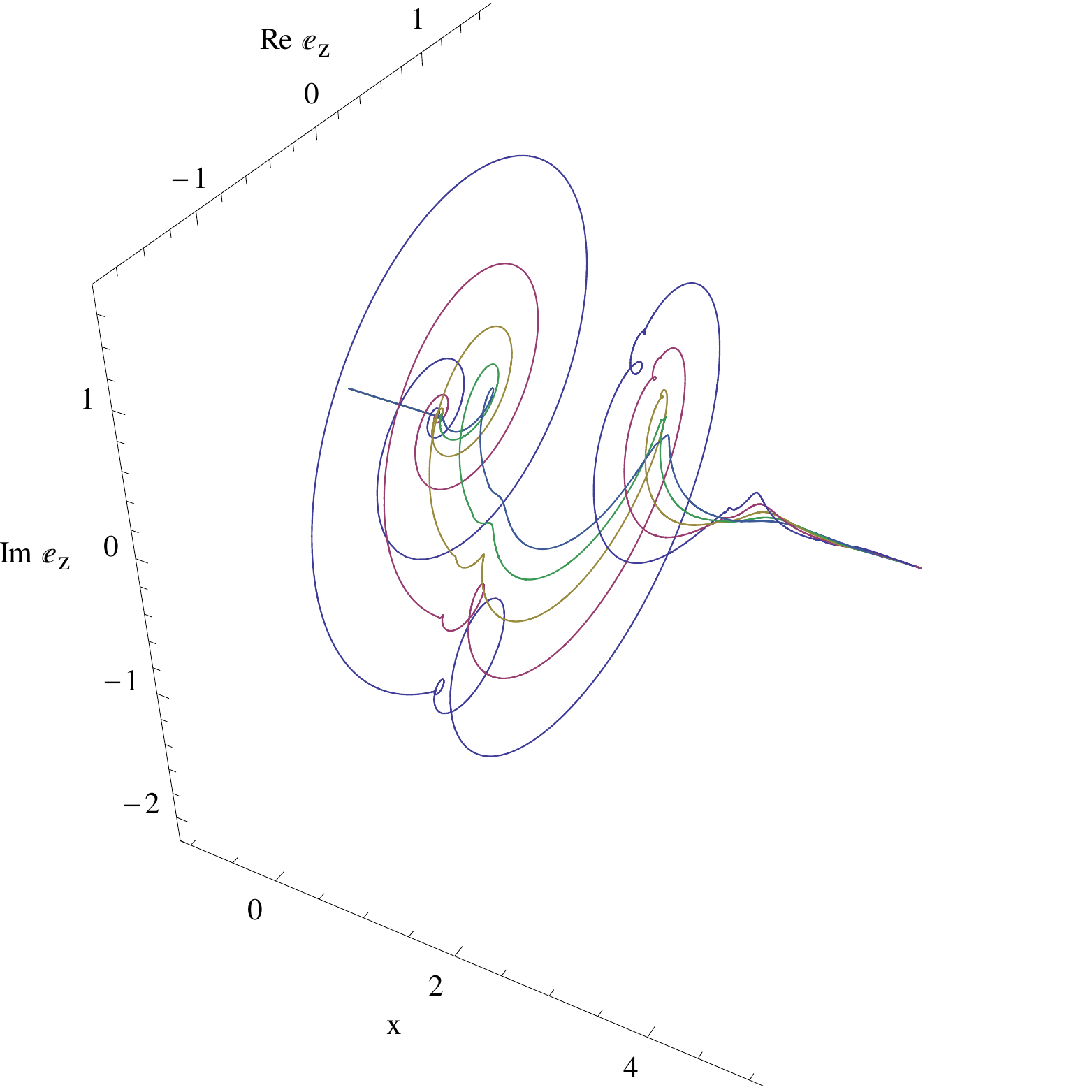}
\caption{$E_{z,a}$ for $z = (3 + \frac{k}{4}) + 3i$, $k = 0, 1, \ldots, 4$ and $a=1.7$. Top right: $\re E_{z,a}$, top left: $\im E_{z,a}$, bottom: Three-dimensional rendering of $E_{z,a}$.}\label{fig4}
\end{center}
\end{figure}
\begin{remark}
Complex polynomial and and exponential B-splines or order $z\in \C_{>1}$ are two-parameter families of functions assigning to each point $x\in [0,\infty)$ both a real value and a single direction given by $\im z$. For several applications however, such as geophysical data processing or multichannel data, more than one independent direction is required. For this purpose, the complex order is replaced by a quaternionic or more generally a hypercomplex order. We refer the interested reader to \cite{Hm17,hm18,m19} for these extensions in the case of polynomial B-splines.
\end{remark}
%%%%%%%%%%%%%%%%%%%%%%%
\section{Self-Referential Functions}\label{selfref}
%%%%%%%%%%%%%%%%%%%%%%%

In this section, we briefly review the concept of self-referential function. For more details and proofs we refer the interested reader to, for instance, \cite{barn,bhm,h,m10,m16,nav} or any other of the numerous publications in fractal interpolation theory. 

In the following, the symbol $\N_N :=\{1, \ldots, N\}$ denotes the initial segment of length $N$ of $\N$. Further, we assume that $N\geq 2$.

Let $I$ be a nonempty interval in $\R$ and suppose that $\{L_n : I\to I : n \in \N_N\}$ is a family of bijections with the property that $\{L_n(I) : n \in\N_N\}$ forms a partition of $I$, i.e., 
\begin{equation}\label{p} 
I = \bigcup_{n=1}^N L_n (I), \quad\text{and}\quad  L_n(I)\cap L_m(I) = \emptyset,\quad\forall n\neq m\in \N_N. 
\end{equation}

Denote by $B(I) := B(I, \R)$ the set
\[
B(I) := \{f : I\to \R : f \, \textrm{bounded}\}.
\]
$(B(I), d)$ becomes a complete metric space when endowed with the metric
\[
d(f,g): = {\sup_{x\in I}} \,|f(x) - g(x)|,
\]
where $|\cdot|$ denotes the Euclidean norm on $\R$.

Let $f, b\in B(I)$ be arbitrary. Consider the Read-Bajraktarev{\'i}c (RB) operator $T: B(I)\to B(I)$ defined on each subinterval $L_n(I)$ by
\be\label{T} 
Tg = f + \alpha_n\cdot (g-b)\circ L_n^{-1},\quad n\in \N_N,
\ee 
with $\alpha_n\in \R$. Under the assumption that $\alpha:= \max\{\alpha_n : n\in \N_N\} < 1$, it follows from the Banach fixed point theorem that $T$ has a unique fixed point $f^*\in B(I)$. This fixed point satisfies the \emph{self-referential equation}
\be\label{eq6.3}
f^* = f + \alpha_n \cdot (f^*-b)\circ L_n^{-1}, \quad \textrm{on} \quad L_n(I),\,\, n\in \N_N.
\ee
Any function in $B(I)$ which satisfies an equation of the form \eqref{eq6.3} is termed a \emph{self-referential function of type $B(I)$}. The functions $f$ and $b$ are called \emph{seed function}, respectively, \emph{base function}.

Note that $f^*$ can be iteratively obtained as the limit of the sequence $\{g_k\}$ defined by
\begin{gather}
g_k := T g_{k-1} = f + \alpha_n \cdot (g_{k-1} - b)\circ L_n^{-1}, \quad \text{on $L_n(I)$}, \quad k\in \N,
\end{gather}
for an arbitrary $g_0\in B(I)$.
\begin{remark}
The fixed point $f^*$ of an RB operator has the property that $\gr f^*$ is made up of a finite number of copies of itself and is therefore, in general, a fractal set. For this reason, $f^*$ is also called a \emph{fractal function} \cite{barn,h,m16}. 
\end{remark}
\begin{remark}
Self-referential functions defined on function spaces other than $B(I)$ can be constructed as well. Examples include, among others,  the smoothness spaces $C^r (I)$, the Lebesgue spaces $L^p (I)$, and the Besov spaces $B^s_{p,q} (I)$. (Cf., for instance, \cite{bhm,m97,m14,m18,m16}.) To ensure that the RB operator $T$ maps a function space into itself, additional conditions at the points $\{L_n (\partial I)\}$, $n\in\N_{N}$, may have to be imposed.
\end{remark}
\begin{remark}
For a given finite set of bijections $\{L_n\}$ or, equivalently, a given partition of $I$ yielding a finite set of bijections, the fixed point $f^*$ depends on the functions $f$ and $b$ as well as the vertical scaling factors $\{\alpha_n\}$. The interested reader may want to consult \cite{nm} in the former case.
\end{remark}
\begin{remark}
For a varying $N$-tuple $\balpha := (\alpha_1, \ldots, \alpha_N)$, the fixed point $f^*$ actually defines an uncountable family $f^{\balpha}$ of self-referential functions indexed by $\balpha\in (-1,1)^N$. Such sets of self-referential functions were termed $\alpha$-fractal functions and considered as the image of an operator $\mathcal{F}^\balpha$, $f\mapsto f^\balpha$. (Cf., i.e., \cite{nav}.)
\end{remark}

%%%%%%%%%%%%%%%%%%%%%%%
\section{Self-Referential Polynomial and Exponential B-Splines}\label{selfrefbexp}
%%%%%%%%%%%%%%%%%%%%%%%

In this section, we consider some fractal extensions of the classical as well as the complex polynomial and exponential B-splines. 

\subsection{Polynomial and exponential splines of integral order}
For this purpose, let $B_N$ be the cardinal polynomial B-spline of order $N\geq 2$ as in \eqref{eq2.3a}. Let $I:= \supp B_N = [0,N]$ and define bijections $L_n :I\to I$, $n\in \N_N$, by
\[
L_n (I) := \begin{cases} [n-1, n), & n \in \N_{N-1};\\ [N-1,N], & n = N.
\end{cases}
\]
Now choose $f:= B_N$ and $b\equiv 0$. Suppose $\alpha:= \max\{\alpha_n : n\in \N_N\} < 1$. Then the RB operator $T$ reads
\[
T g = B_N + \alpha_n \cdot g\circ L_n^{-1},\quad \text{on $L_n(I)$}, \quad n\in \N_N,
\]
for any $g\in B(I)$. As $B_N\in C^{N-2}$, we additionally require $g\in C^{N-2}(I)$ and impose the joint-up conditions
\be\label{eq7.1}
\forall\,m\in\N_{N-1}\,:(Tg)^{(\nu)} (m-) = (Tg)^{(\nu)} (m+), \quad \nu = 0, 1, \ldots, N-2.
\ee
Conditions \eqref{eq7.1} guarantee that $Tg\in C^{N-2}(I)$ and as $C^{N-2}(I)$ becomes a Banach space under the norm $\sum\limits_{\nu=0}^{N-2} \|(\cdot)^{(\nu)}\|_\infty$, the unique fixed point $\fB_N$ of $T$ is an element of $C^{N-2}(I)$ and a self-referential function:
\be\label{eq7.2}
\fB_N = B_N + \alpha_n \cdot \fB_N \circ L_n^{-1}, \quad \text{on $L_n(I)$}, \quad n\in \N_N.
\ee
As the fixed point $\fB_N$ depends continuously on the set of parameters $\balpha := (\alpha_1, \ldots, \alpha_N)\in (-1,1)^N$, we also write $\fB_N(\balpha)$ should the need arise. Hence, \eqref{eq7.2} defines an uncountable family of functions parametrized by $\balpha$. Clearly, $\balpha = 0$ reproduces the seed function $B_N$. (See also \cite{ns}.)

Figure \ref{fig5} depicts two such fractal polynomial B-splines: the linear $\fB_2((\frac34,\frac34))$ and the quadratic $\fB_3 ((\frac14,\frac14,\frac14))$. Note that $\fB_3 ((\frac14,\frac14,\frac14))$ is differentiable on $[0,3]$ and its graph is made up of three copies of itself. 

\begin{figure}[h!]
\begin{center}
\includegraphics[width=4cm,height=2.5cm]{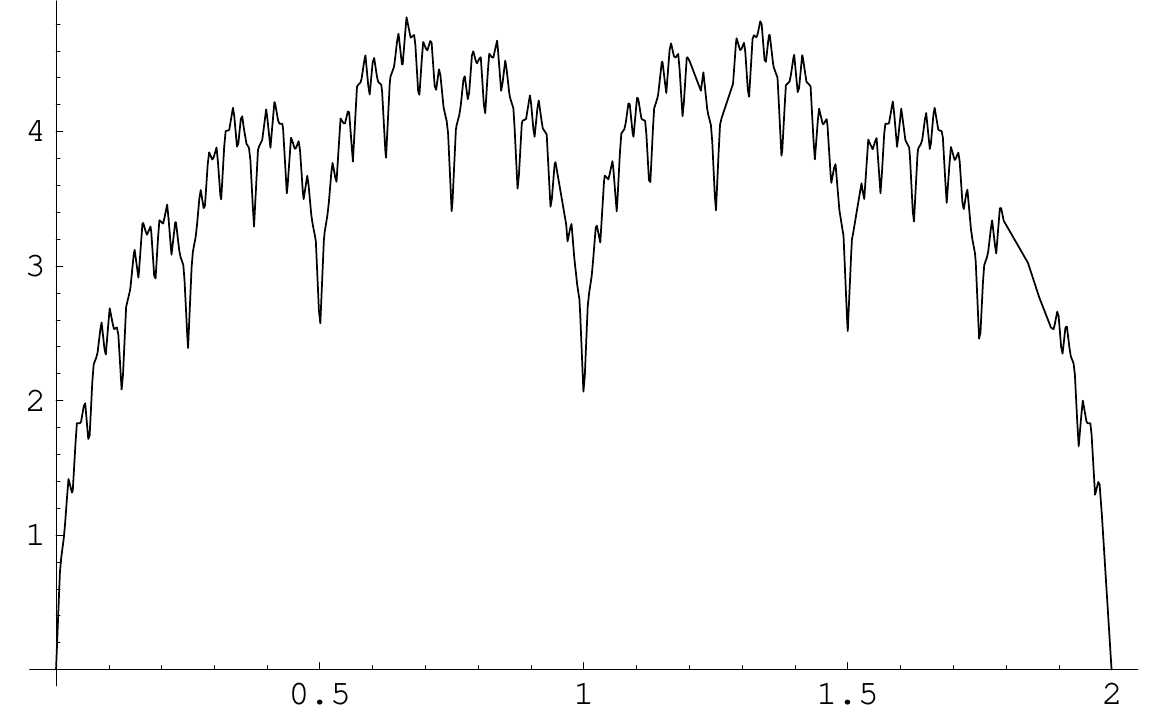}
\hspace*{1cm}\includegraphics[width=4cm,height=2.5cm]{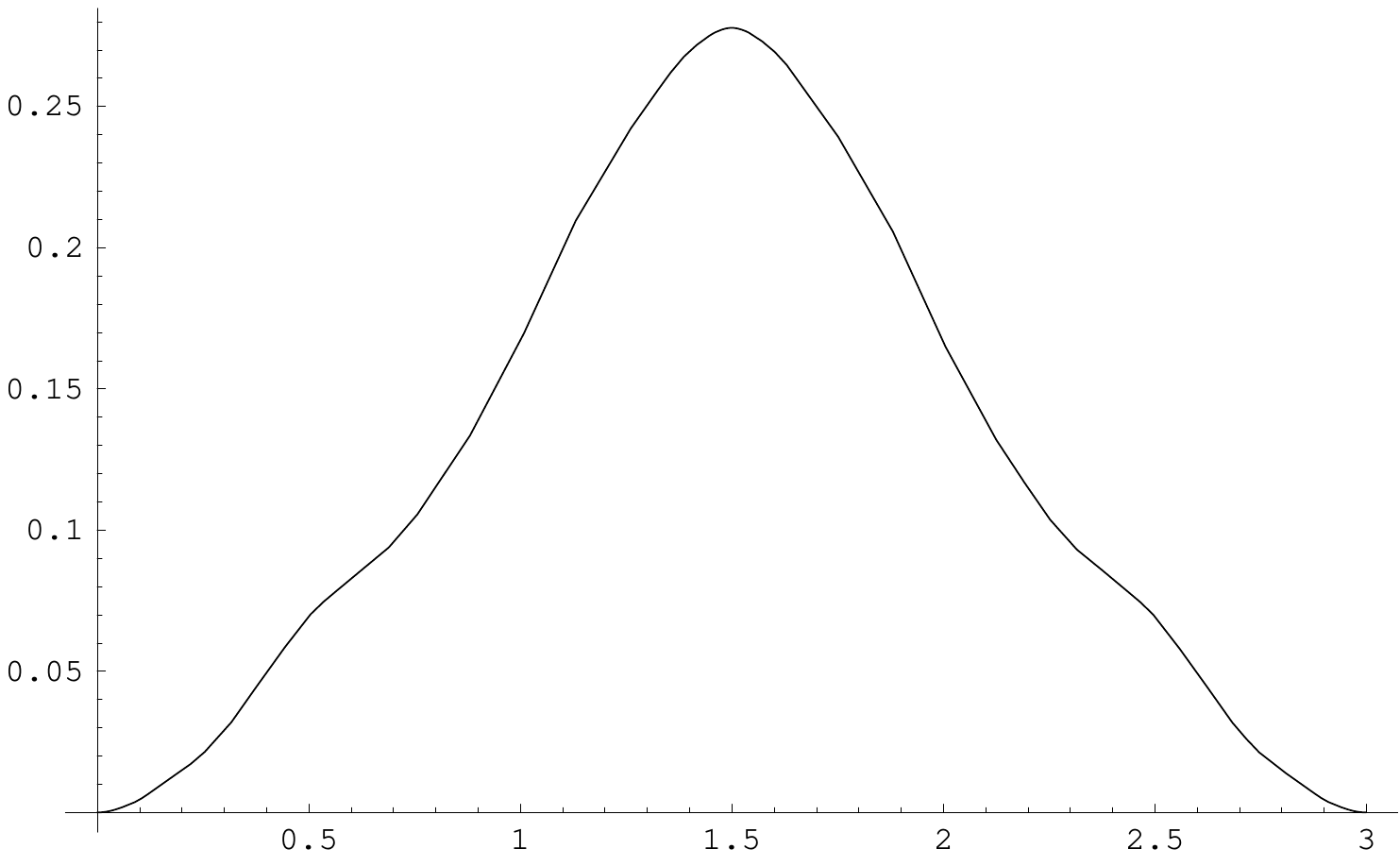}
\caption{A linear (left) and a quadratic (right) fractal polynomial B-spline.}\label{fig5}
\end{center}
\end{figure}

In a similar fashion, we can take $I:=[0,N]$, $f:= E_{N,\ba}$, and set again $b\equiv 0$ to generate an uncountable family of fractal analogues of the classical exponential B-splines $E_{N,\ba}$. The RB operator then reads
\[
T g = E_{N,\ba} + \alpha_n \cdot g\circ L_n^{-1},\quad \text{on $L_n(I)$}, \quad n\in \N_N,
\]
for any $g\in C(I)$ (as the functions $E_{N,\ba}$ are continuous on $I$). As above, we choose $\balpha\in (-1,1)^N$ and impose the continuity conditions
\[
Tg(m-) = Tg(m+), \quad m\in \N_{N-1}.
\]
Under these conditions, $T$ is well-defined and contractive from $C(I)$ into itself. Its unique fixed point $\fE_{N,\ba} := \fE_{N,\ba}(\balpha)$ satisfies the self-referential equation
\be\label{eq7.3}
\fE_{N,\ba} = E_{N,\ba}+ \alpha_n \cdot \fE_{N,\ba} \circ L_n^{-1}, \quad \text{on $L_n(I)$}, \quad n\in \N_N.
\ee
\noindent
In Figures \ref{fig6} and \ref{fig7}, two fractal exponential B-splines are depicted.
\begin{figure}[h!]
\begin{center}
\includegraphics[width=4cm,height=2cm]{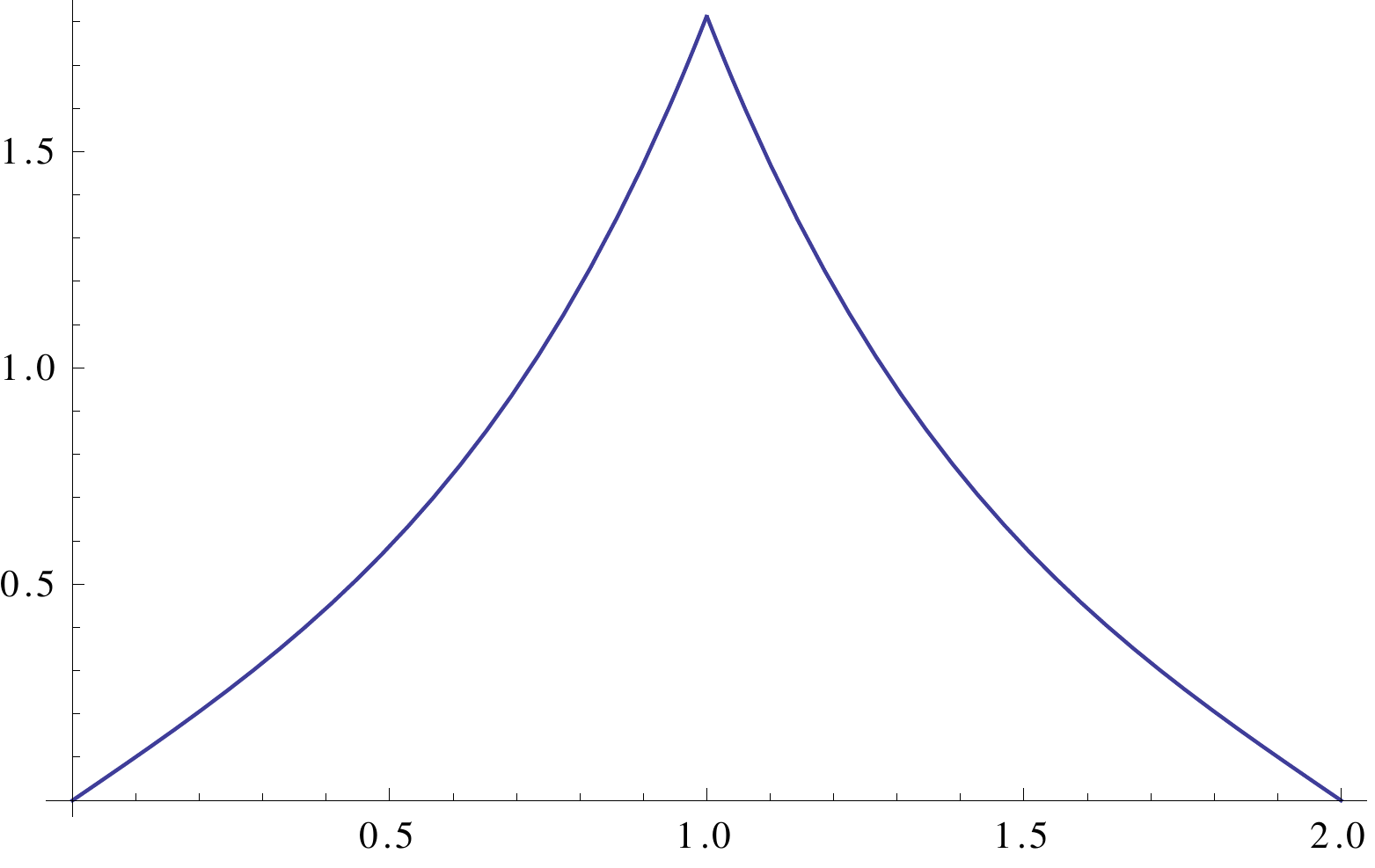}
\hspace*{1cm}\includegraphics[width=4cm,height=2cm]{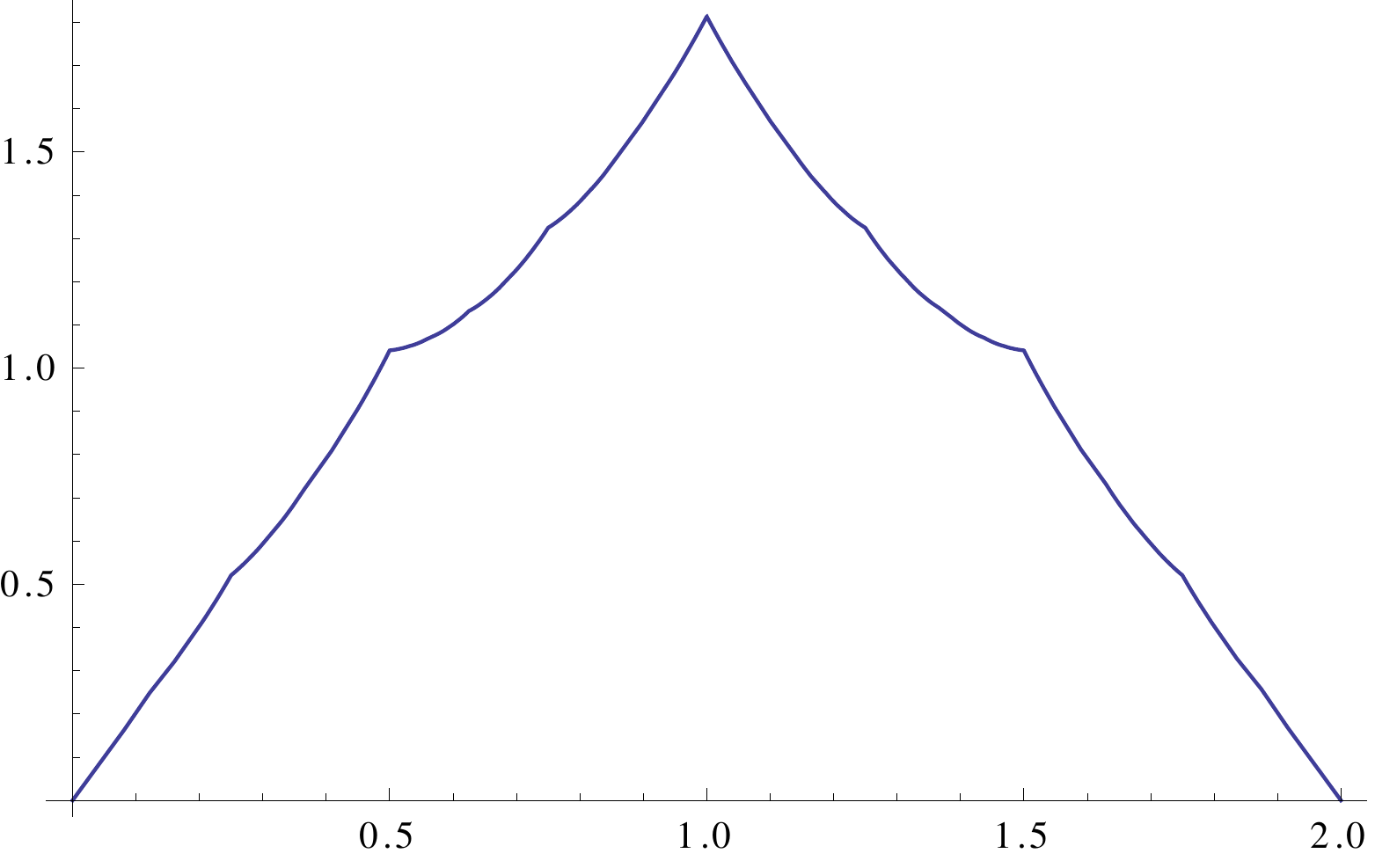}
\caption{The graphs of $E_{2, (2,-2)}$ and $\fE_{2, (2,-2)}(\frac14,\frac14)$}\label{fig6}
\end{center}
\end{figure}

\begin{figure}[h!]
\begin{center}
\includegraphics[width=4cm,height=2cm]{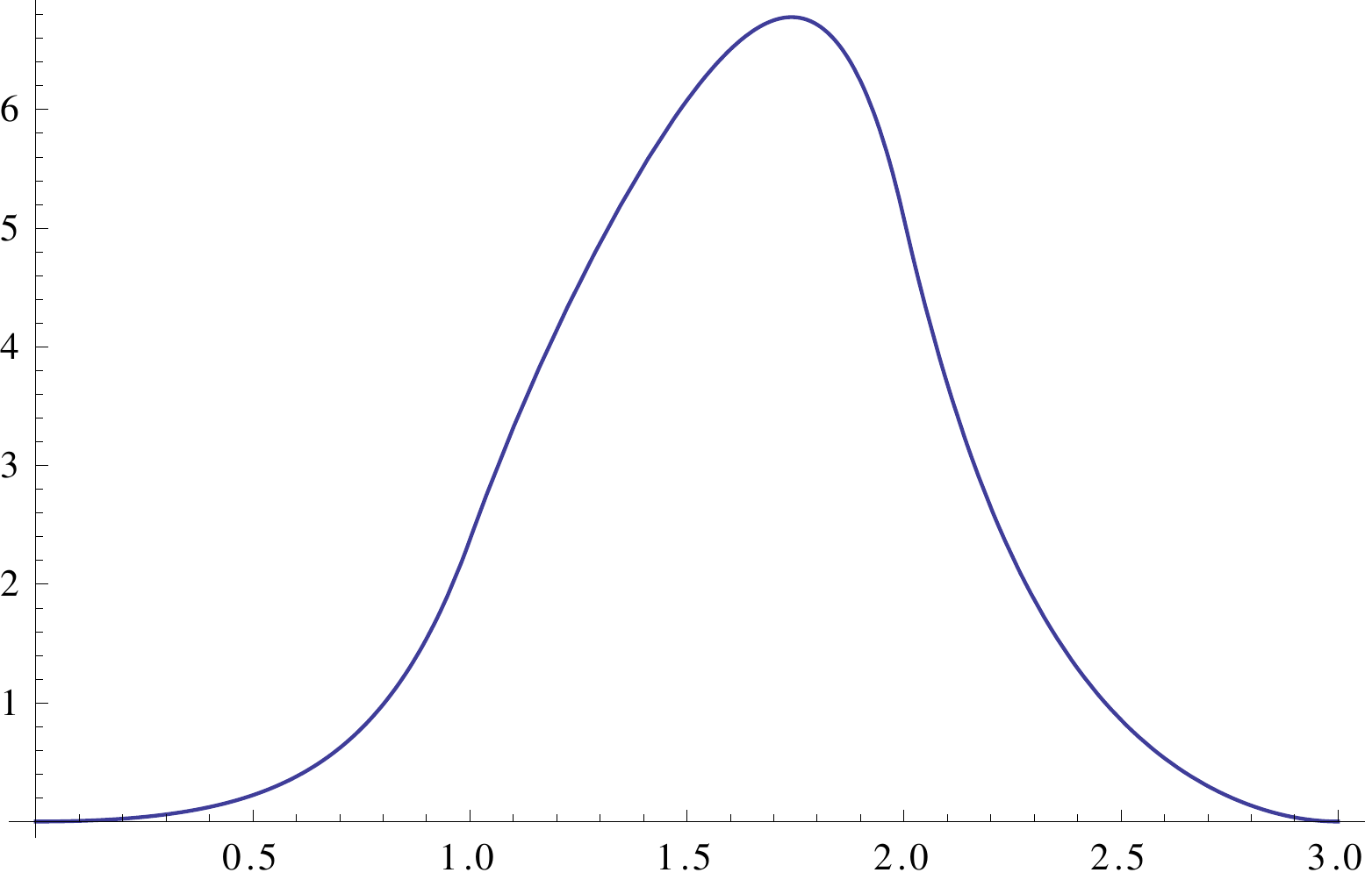}
\hspace*{1cm}\includegraphics[width=4cm,height=2cm]{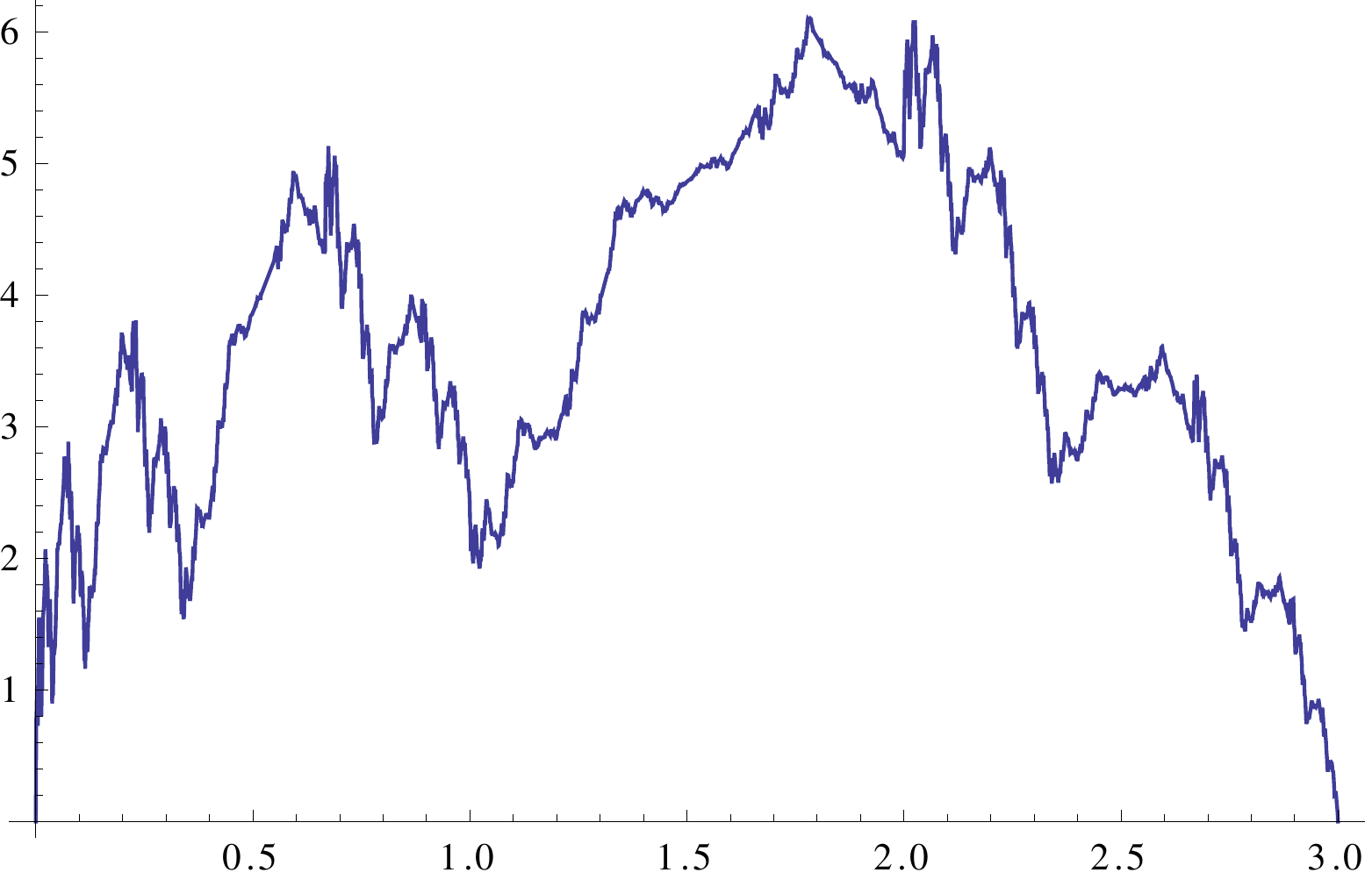}
\caption{The graphs of $E_{3, (4,-3,1)}$ and $\fE_{3, (4,-3,1)}(\frac34,-\frac14,\frac12)$}\label{fig7}
\end{center}
\end{figure}

\subsection{Polynomial and exponential B-splines of complex order}

In order to derive the fractal extensions of polynomial and exponential B-splines of complex order, we need to take into account the fact that the support of $B_z$ and $E_{z,a}$ is the unbounded interval $I:=[0,\infty)$ and extend the above construction to this setting. 

To be specific, suppose that the bijections $L_n$, $n\in \N_N$, are such that $L_n (I)$, $n\in \N_{N-1}$, is bounded on $\R$ and $L_N (I)$ unbounded. As before, we require that Eqn. \eqref{p} holds. We note that this set-up is an important special case of a general approach investigated in \cite{m18}.

To this end, we introduce the Banach space $(C_{0,0} (\R^+_0),\|\cdot\|_\infty)$ given by 
\[
C_{0,0} (\R^+_0) := C_{0,0} (\R^+_0, \R) :=\left\{f\in C(\R^+_0, \R) : f(0) = 0 \wedge\lim_{x\to \infty} f(x) = 0\right\}.
\]
As  $B_z$ and $E_{z,a}$ are continuous functions of the time variable $x$, vanish at $x = 0$, and satisfy $\lim\limits_{x\to\infty} B_z(x) = 0 = \lim\limits_{x\to\infty} E_{z,a}(x)$, we need to impose conditions on the RB operator $T$ in Eqn. \eqref{T} to map $C_{0,0} (\R^+_0)$ into itself.

These conditions read as follows. For $n \in \N_{N-1}$, denote 
\be
\begin{aligned}
L_n (0) &=: x_{n-1}, \label{eq6.4}\\
L_n (\infty) &=: x_n,
\end{aligned}
\ee
and for $n:=N$:
\be
\begin{aligned}
L_N (0) &=: x_{N-1} ,\\
L_n (\infty) &= \infty\label{eq6.7}.
\end{aligned}
\ee
Here, we used the shorthand notation $f(\infty) := \lim\limits_{x\to\infty} f(x)$ for a function $f$.

As a base function, we choose again $b\equiv 0$ on $[0,\infty)$ and require that, for $n\in \N_{N-1}$, 
\be\label{eq6.8}
T g(x_n-) = Tg (x_n+)
\ee
or, equivalently, 
\be\label{eq6.9}
T g(L_n (\infty)) = Tg (L_{n+1}(0)),
\ee 
with the obvious modification for $n=N$. 

\begin{theorem}\label{th3}
Suppose bijections $L_n : \R^+_0\to \R^+_0$ are chosen such that $\{L_n (\R^+_0)\}_{n\in \N_N}$ forms a partition of $[0,\infty)$ subject to \eqref{eq6.4} and \eqref{eq6.7}. Further suppose that $T: C_{0,0} (\R^+_0)\to C_{0,0} (\R^+_0)$ is given by
\be\label{eq6.10}
Tg = f + \alpha_n\cdot g\circ L_n^{-1},
\ee
and satisfies \eqref{eq6.8}, where $f\in C_{0,0} (\R^+_0)$ is arbitrary and $\alpha := \max\limits\{|\alpha_n| : n\in \N_N\} < 1$. Then $T$ is well-defined and contractive on $(C_{0,0} (\R^+_0),\|\cdot\|_\infty)$ with Lipschitz constant $\alpha$. 
\end{theorem}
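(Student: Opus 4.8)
The plan is to prove the two assertions in turn: that $T$ maps $C_{0,0}(\R^+_0)$ into itself, and that it contracts distances by the factor $\alpha$. The contraction estimate is a short uniform bound, so I would spend the bulk of the effort on well-definedness, where the unbounded domain forces checks that are absent from the classical bounded-interval construction.

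For well-definedness I would fix $g\in C_{0,0}(\R^+_0)$ and verify that $Tg$ is continuous on $[0,\infty)$, vanishes at the origin, and tends to $0$ at infinity. Continuity on the interior of each cell $L_n(\R^+_0)$ is immediate, since there $Tg=f+\alpha_n\,g\circ L_n^{-1}$ is a sum and composition of continuous maps. At an interior junction $x_n$, $n\in\N_{N-1}$, continuity is precisely hypothesis \eqref{eq6.8}; I would moreover observe that it holds automatically in this setting, because $L_{n+1}^{-1}(x_n)=0$ and $L_n^{-1}(x)\to\infty$ as $x\to x_n^-$ (by \eqref{eq6.4}), so the constraints $g(0)=0$ and $\lim_{t\to\infty}g(t)=0$ force both one-sided values of $Tg$ at $x_n$ to equal $f(x_n)$. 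The value at the origin is $Tg(0)=f(0)+\alpha_1\,g(L_1^{-1}(0))=f(0)+\alpha_1\,g(0)=0$, using $x_0=0$ and $f(0)=g(0)=0$.

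The heart of the matter, and the step I expect to be the main obstacle, is the behavior at infinity, since this is where the argument genuinely leaves the bounded-domain setting. On the single unbounded cell $L_N(\R^+_0)$ one has $Tg(x)=f(x)+\alpha_N\,g(L_N^{-1}(x))$. Because $L_N$ is an increasing continuous bijection of $[0,\infty)$ onto $[x_{N-1},\infty)$ with $L_N(\infty)=\infty$ by \eqref{eq6.7}, its inverse satisfies $L_N^{-1}(x)\to\infty$ as $x\to\infty$; hence $g(L_N^{-1}(x))\to 0$, while $f(x)\to 0$ since $f\in C_{0,0}(\R^+_0)$. This gives $\lim_{x\to\infty}Tg(x)=0$, so $Tg\in C_{0,0}(\R^+_0)$ and $T$ is well-defined. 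It is precisely the presence of the single unbounded cell with $L_N(\infty)=\infty$ that makes this limit work; without it the image need not vanish at infinity.

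For contractivity I would take $g,h\in C_{0,0}(\R^+_0)$ and estimate pointwise on each cell: for $x\in L_n(\R^+_0)$,
\[
|Tg(x)-Th(x)| = |\alpha_n|\,\bigl|(g-h)(L_n^{-1}(x))\bigr| \le \alpha\,\|g-h\|_\infty,
\]
the inequality using $L_n^{-1}(x)\in\R^+_0$ and $|\alpha_n|\le\alpha$. Since this bound is uniform in $n$ and $x$, taking the supremum over $x\in\R^+_0$ yields $\|Tg-Th\|_\infty\le\alpha\,\|g-h\|_\infty$ with $\alpha<1$, which is the claimed Lipschitz estimate. Together with completeness of $(C_{0,0}(\R^+_0),\|\cdot\|_\infty)$, this would then furnish a unique fixed point via the Banach fixed point theorem, although that conclusion lies beyond the present statement.
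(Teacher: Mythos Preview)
Your proof is correct and follows exactly the approach sketched in the paper, which merely asserts that the join-up conditions and the properties of the $L_n$ make $T$ well-defined on $C_{0,0}(\R^+_0)$ and that contractivity is ``straightforward''; you have supplied the details the paper omits. Your additional observation that condition \eqref{eq6.8} is in fact automatic here (since both one-sided limits equal $f(x_n)$ by $g(0)=0$ and $g(\infty)=0$) goes slightly beyond the paper but is a welcome clarification.
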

\begin{proof}
The conditions on the bijections $\{L_n\}$ and the join-up conditions \eqref{eq6.8} guarantee that $T$ is well-defined and maps $C_{0,0} (\R^+_0)$ into itself. To establish that $T$ is contractive on $(C_{0,0} (\R^+_0),\|\cdot\|_\infty)$ with Lipschitz constant $\alpha$ is straightforward.
\end{proof}
The unique fixed point $f^*\in C_{0,0} (\R^+_0)$ of $T$ as defined in Eqn. \eqref{eq6.10} is called a \emph{self-referential function of class $C_{0,0} (\R^+_0)$}.

\begin{remark}\label{rem5}
Note that Theorem \ref{th3} also holds for the Banach spaces $(C_{0,0} (\R_0^+, \C), \|\cdot \|_\infty)$.
\end{remark}

As noted above, for varying $\balpha = (\alpha_1, \ldots, \alpha_N)$ subject to $\alpha := \max\limits\{|\alpha_n| : n\in \N_N\} < 1$, $f^*$ actually defines an uncountably infinite family $f^\balpha$ of self-referential functions containing the seed function $f$.

As two prominent examples of how to obtain the fractal extension of functions in $C_{0,0} (\R^+_0)$, we consider $f = B_z$ and $f = E_{z,a}$. For this purpose and the sake of presentation, we choose $N:=2$ and define bijections $L_n: \R^+_0\to \R^+_0$ by
\[
L_1(x) := {2}\,{\pi^{-1}}\arctan x\quad\text{and}\quad L_2(x) := x+1.
\]
Then $[0,\infty) = L_1 ([0,\infty)) \cup L_2 ([0,\infty)) = [0,1) \cup [1,\infty)$.

Now select $f := B_z$, respectively, $f = E_{z,a}$, choose $\alpha_n \in (-1,1)$, $n = 1,2$, and define RB operators
\[
T_1 g := B_z + \alpha_1\, g\circ \tan\, (\tfrac{\pi x}{2})\big\vert_{[0,1)} + \alpha_2\, g(x-1)\big\vert_{[1,\infty)}.
\]
and 
\[
T_2 g := E_{z,a} + \alpha_1\, g\circ \tan\, (\tfrac{\pi x}{2})\big\vert_{[0,1)} + \alpha_2\, g(x-1)\big\vert_{[1,\infty)}.
\]
By Theorem \ref{th3} and Remark \ref{rem5}, we obtain the fractal extensions of $B_z$ and $E_{z,a}$ as the fixed points $\fB_z (\balpha)$, respectively, $\fE_{z,a}(\balpha)$ of the RB operators $T_1$ and $T_2$:
\[
\fB_z = B_z + \alpha_1\, \fB_z\circ \tan\, (\tfrac{\pi x}{2})\big\vert_{[0,1)} + \alpha_2\, \fB_z(x-1)\big\vert_{[1,\infty)}.
\]
and 
\[
\fE_{z,a} = E_z^a + \alpha_1\, \fE_{z,a}\circ \tan\, (\tfrac{\pi x}{2})\big\vert_{[0,1)} + \alpha_2\, \fE_{z,a}(x-1)\big\vert_{[1,\infty)}.
\]
In Figures \ref{fig8} and \ref{fig9}, the graphs of $\fB_{\pi+i} (\frac34,-\frac12)$ and $\fE_{\sqrt{2}+i,1}(\frac34,-\frac12)$ are displayed.

\begin{figure}[h!]
\begin{center}
\includegraphics[width = 5cm, height = 3cm]{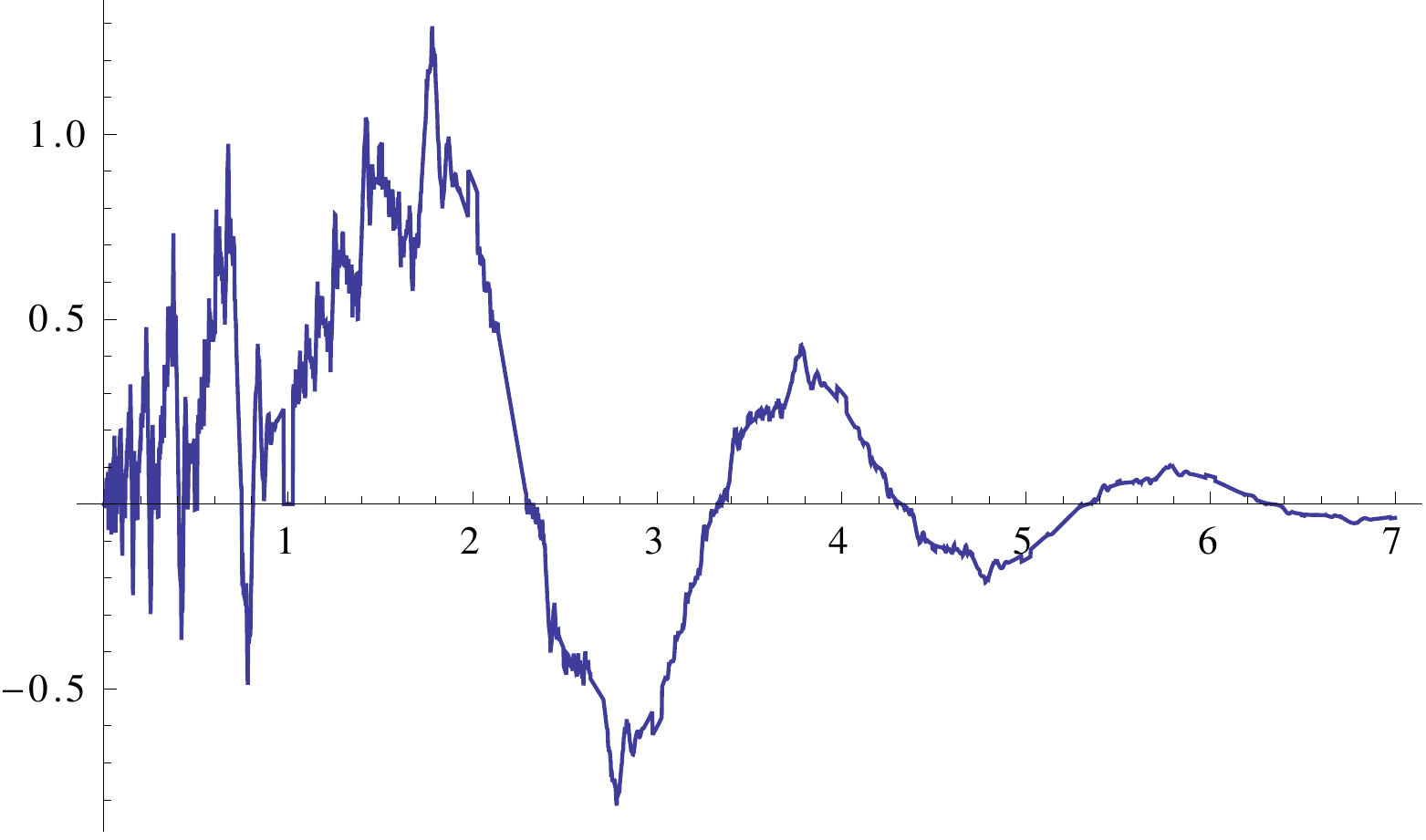}\hspace{1.5cm}
\includegraphics[width = 5cm, height = 3cm]{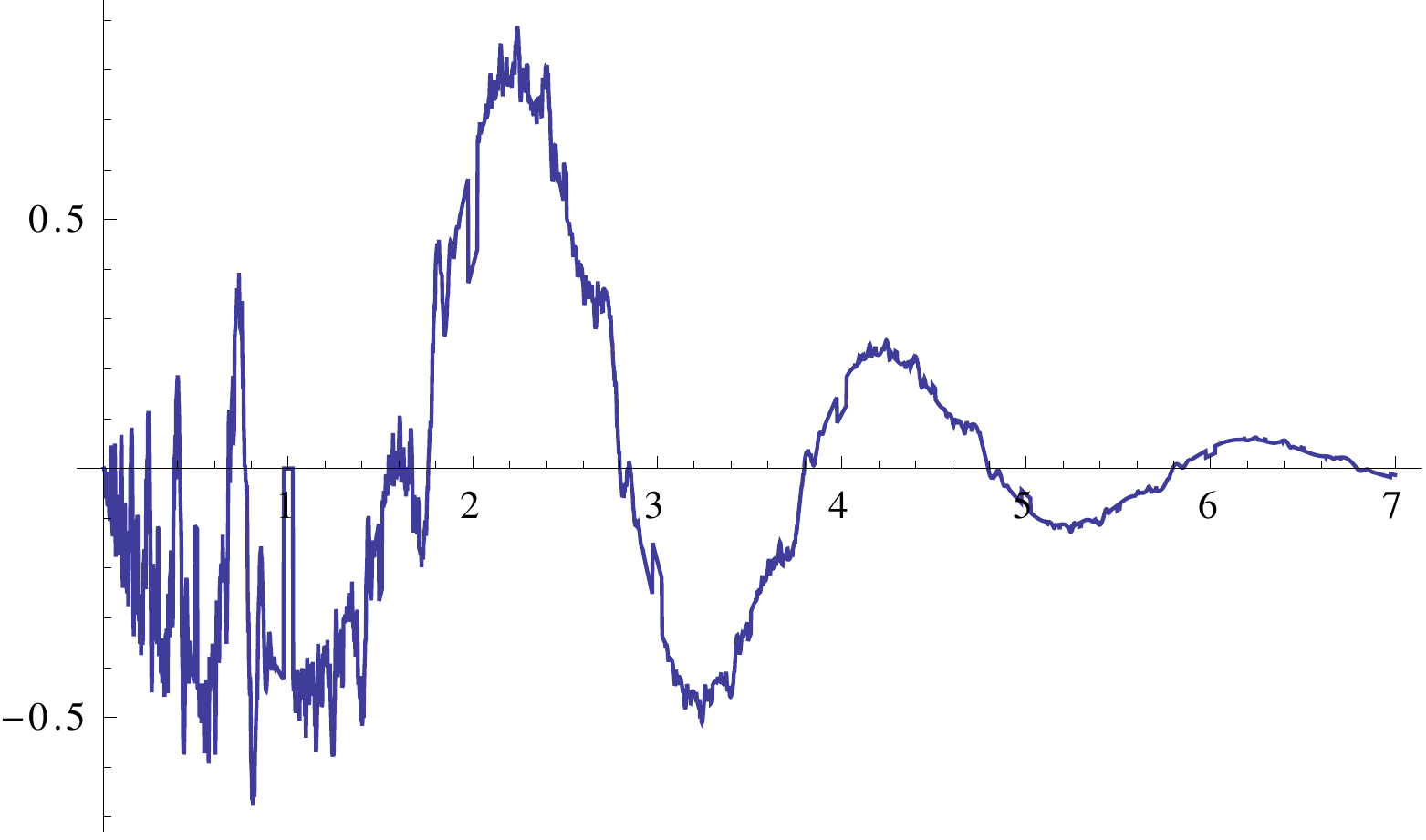}\\
\includegraphics[width = 6cm, height = 4cm]{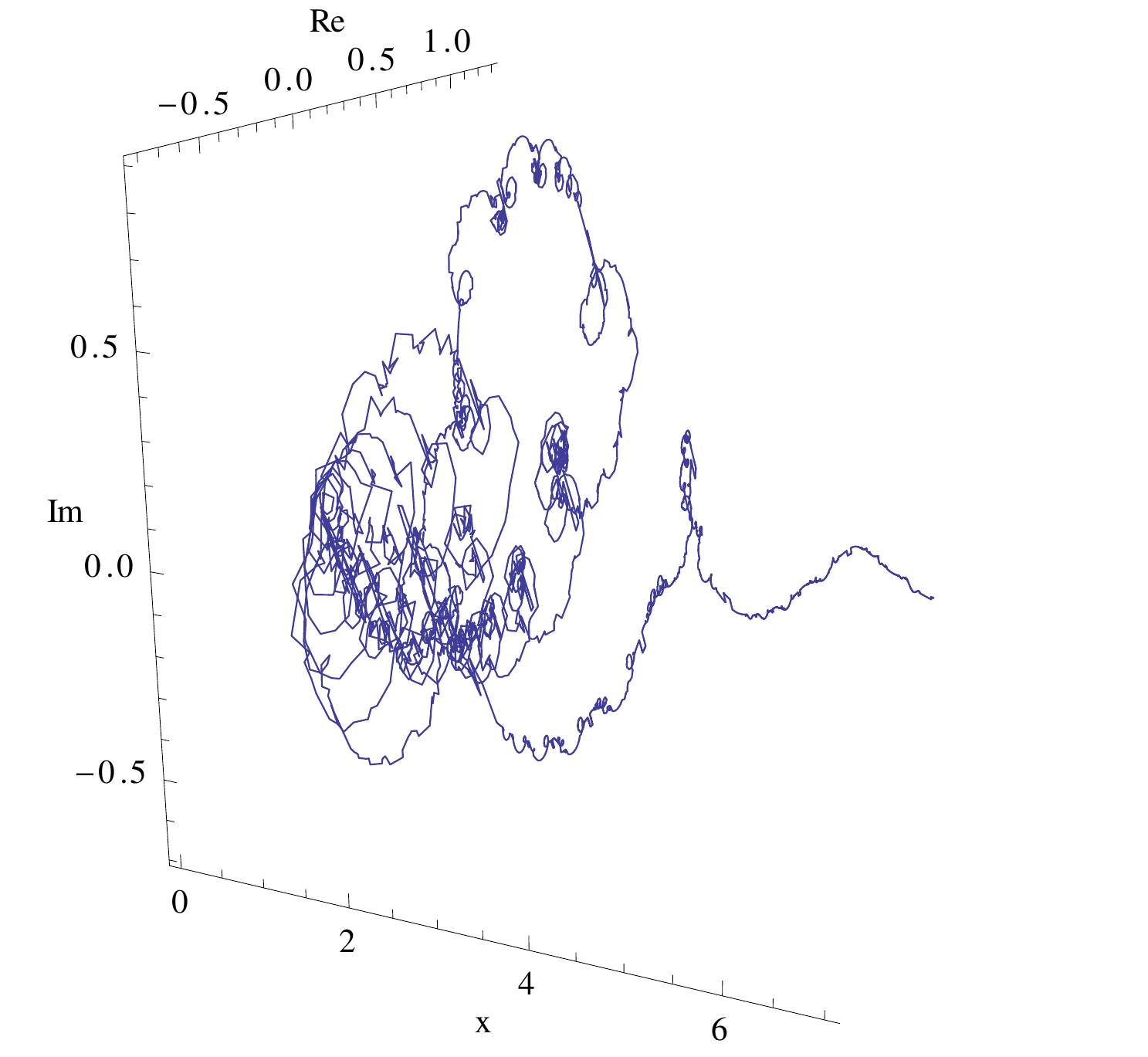}
\caption{Top right: $\re \fB_{\pi+i} (\frac34,-\frac12)$, top left: $\im \fB_{\pi+i} (\frac34,-\frac12)$, bottom: Three-dimensional rendering of $\fB_{\pi+i} (\frac34,-\frac12)$.}\label{fig8}
\end{center}
\end{figure}

\begin{figure}[h!]
\begin{center}
\includegraphics[width = 5cm, height = 3cm]{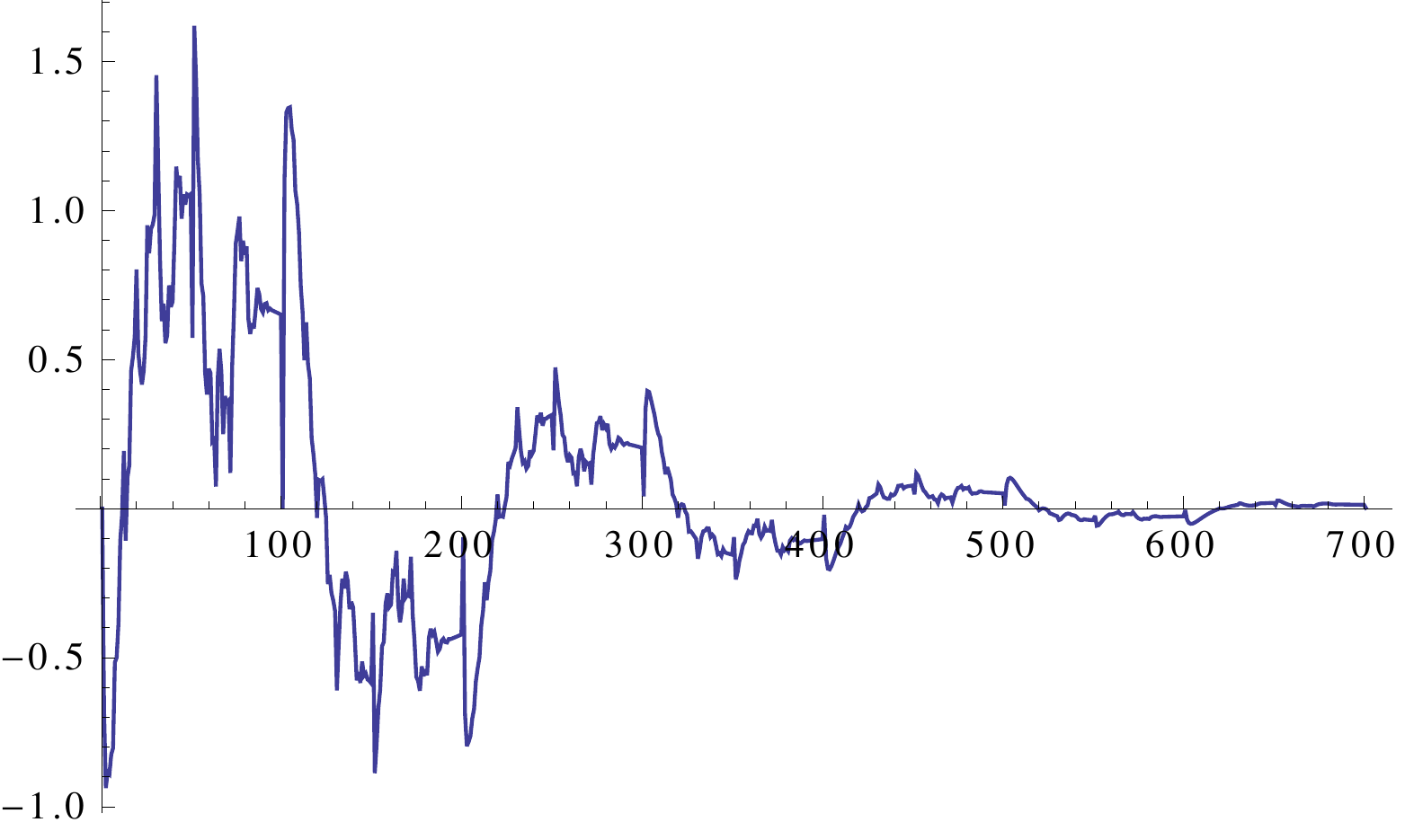}\hspace{1.5cm}
\includegraphics[width = 5cm, height = 3cm]{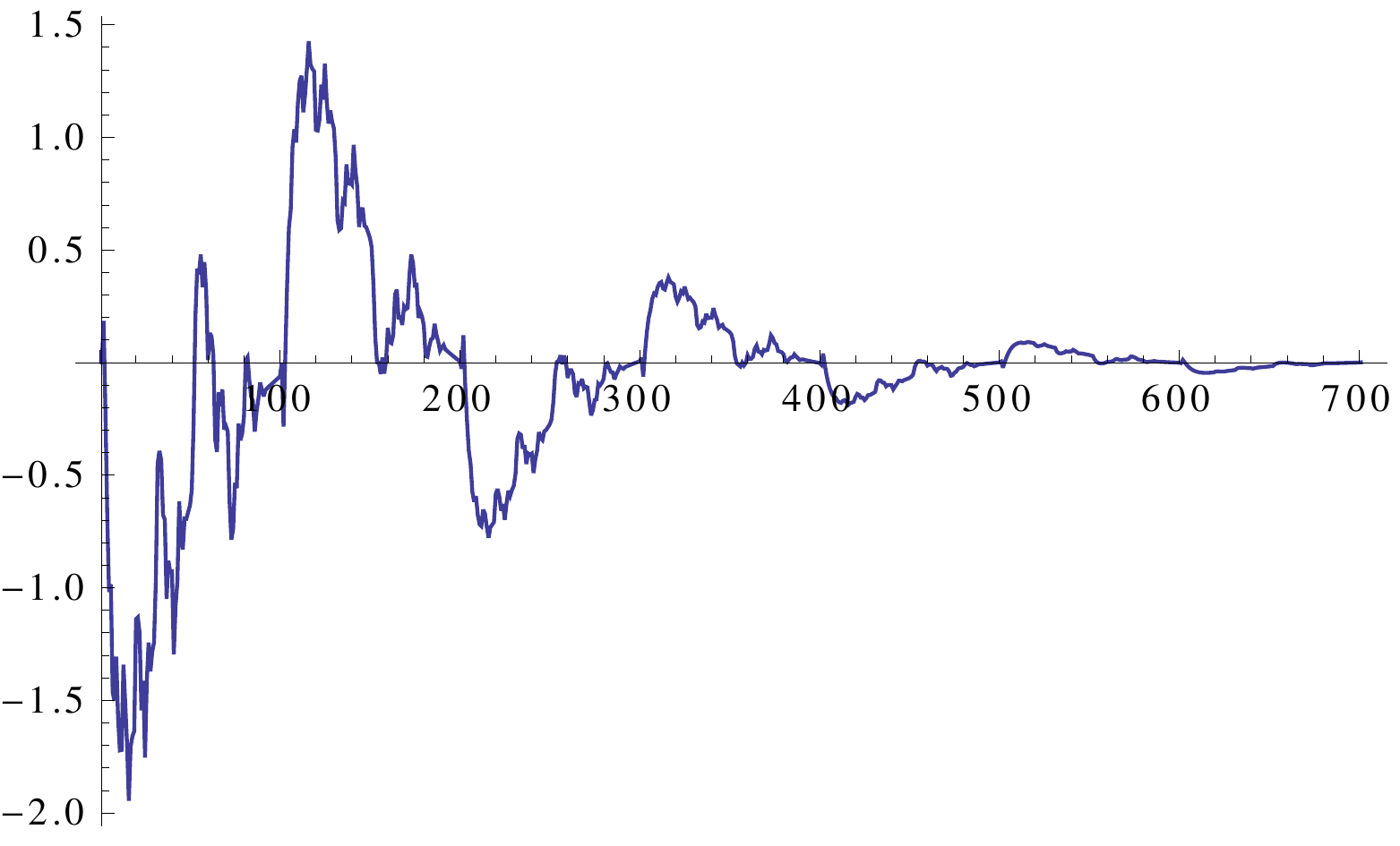}\\
\includegraphics[width = 6cm, height = 4cm]{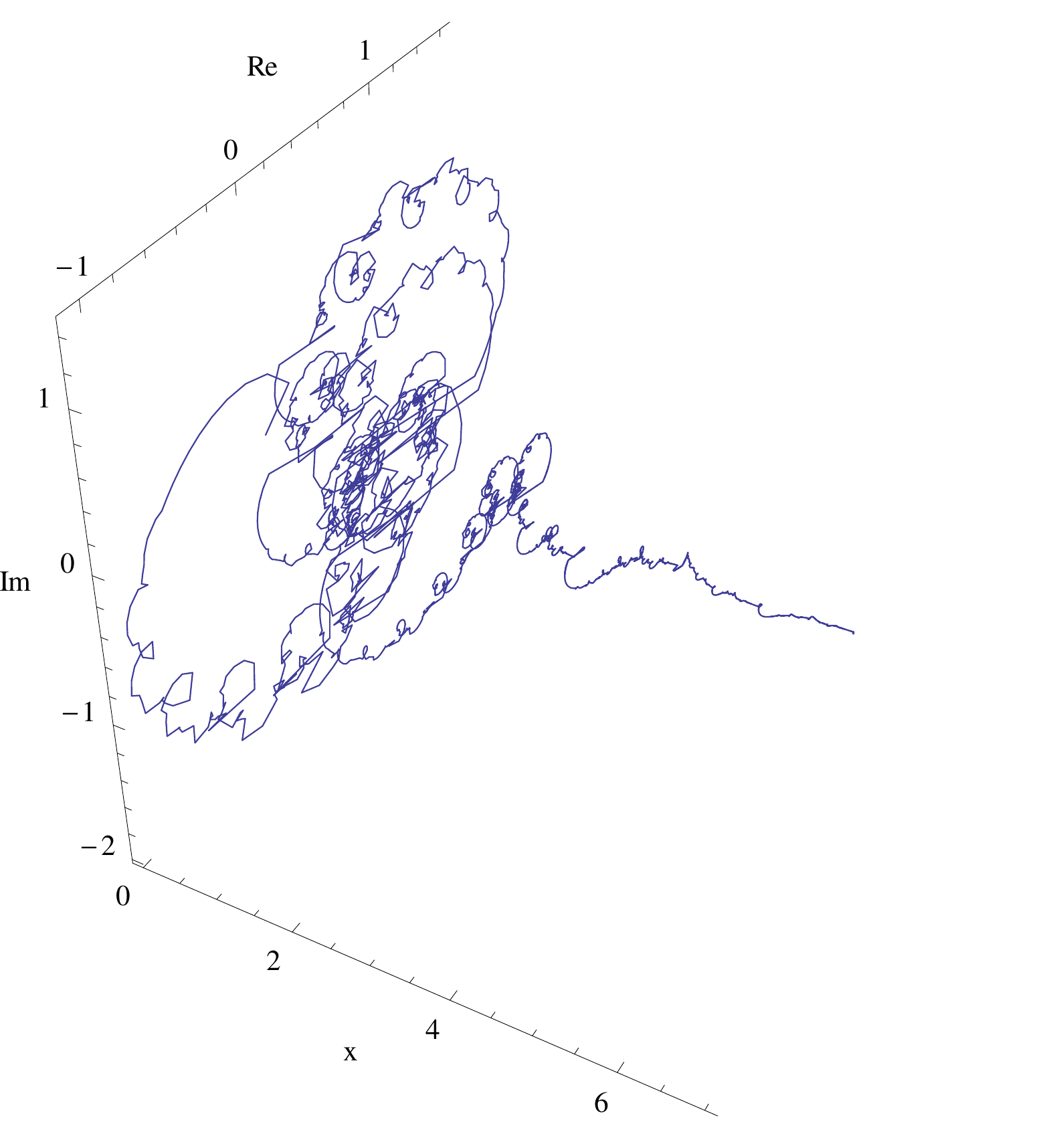}
\caption{Top right: $\re \fE_{\sqrt{2}+i,1}(\frac34,-\frac12)$, top left: $\im \fE_{\sqrt{2}+i,1}(\frac34,-\frac12)$, bottom: Three-dimensional rendering of $\fE_{\sqrt{2}+i,1}(\frac34,-\frac12)$. The length unit on the $x$-axis for the graphs on top is $\frac{1}{100}$.}\label{fig9}
\end{center}
\end{figure}

\begin{remark}
The families of self-referential functions supported on the interval $I=[0, \infty)$ not only depend on $\balpha$ but also on the partition induced by the bijections $L_n$ on $I$. Denoting the collection of all such partitions by $\Pi = \Pi_N$, the set of fixed points $f^{\balpha}$ should more precisely be written as $f^\balpha_{\Pi}$ and regarded as a function $(-1,1)^N\times \Pi\to f^*$.
\end{remark}
%%%%%%%%%%%%%%%%%%%%%%%%%%%%%%%%%%%%%%%%%%%%%%%%%%%  
\bibliographystyle{plain}
\bibliography{Jaca}

\begin{thebibliography}{10}

\bibitem{ammar}
G.~Ammar, W.~Dayawansa, and C.~Martin.
\newblock Exponential interpolation theory: {Theory} and numerical algorithms.
\newblock {\em Appl. Math. Comput.}, 41:189--232, 1991.

\bibitem{barn}
M.~F. Barnsley.
\newblock Fractal functions and interpolation.
\newblock {\em Const. Approx.}, 2:303--329, 1986.

\bibitem{bhm}
M.~F. Barnsley, M.~Hegland, and P.~Massopust.
\newblock Numerics and fractals.
\newblock {\em Bull. Inst. Math. Acad. Sinica (N.S.)}, 9(3):389--430, 2014.

\bibitem{cm}
O.~Christensen and P.~Massopust.
\newblock Exponential {B}-splines and the parition of unity property.
\newblock {\em Adv. Comput. Math.}, 37:301--318, 2012.

\bibitem{dm1}
W.~Dahmen and C.~A. Micchelli.
\newblock On the theory and applications of exponential splines.
\newblock In C.~K. Chui, L.~L. Schumaker, and F.~I. Utreras, editors, {\em
  Topics in Multivariate Approximation}. Academic Press, Boston, 1987.

\bibitem{deboor}
Carl de~Boor.
\newblock {\em A {P}ractical {G}uide to {S}plines}.
\newblock Number~27 in Applied Mathematical Sciences. Springer Verlag, 2001.

\bibitem{fub}
B.~Forster, M.~Unser, and T.~Blu.
\newblock Complex {B}-splines.
\newblock {\em Appl. Comput. Harm. Anal.}, 20:261--282, 2006.

\bibitem{Hm17}
J.~Hogan and P.~Massopust.
\newblock Quaternionic {B}-{S}plines.
\newblock {\em J. Approx. Th.}, 224:43--65, 2017.

\bibitem{hm18}
J.~Hogan and P.~Massopust.
\newblock Quaternionic fundamental cardinal splines: Interpolation and
  sampling.
\newblock {\em arxiv.org/abs/1804.06638}, pages 1--24, 2018.

\bibitem{h}
J.~E. Hutchinson.
\newblock Fractals and self-similarity.
\newblock {\em Indiana J. Math.}, 30(5):713--747, 1981.

\bibitem{m97}
P.~Massopust.
\newblock Fractal functions and their applications.
\newblock {\em Chaos, Solitons, \& Fractals}, 8(2):171--190, 1997.

\bibitem{m05}
P.~Massopust.
\newblock Fractal functions, splines, and besov and triebel-lizorkin spaces.
\newblock In J.~L\'evy-V\'ehel and E.~Lutton, editors, {\em Fractals in
  Engineering: New trends and applications}, pages 21--32. Springer Verlag,
  London, 2005.

\bibitem{m10}
P.~Massopust.
\newblock {\em Interpolation and Approximation with Splines and Fractals}.
\newblock Oxford University Press, 2010.

\bibitem{m}
P.~Massopust.
\newblock Exponential splines of complex order.
\newblock {\em Contemp. {M}ath.}, 626:87--105, 2014.

\bibitem{m14}
P.~Massopust.
\newblock Local fractal functions and function spaces.
\newblock In C.~Bandt et~al., editor, {\em Fractals, {W}avelets, and their
  {A}pplications}, Springer Proceedings in Mathematics \& Statistics, pages
  245--270. Springer Verlag, 2014.

\bibitem{m18}
P.~Massopust.
\newblock Local fractal interpolation on unbounded domains.
\newblock {\em Proc. Edinburgh Math. Soc.}, 61:151--167, 2018.

\bibitem{m19}
P.~Massopust.
\newblock Splines and fractional differential operators.
\newblock {\em arxiv.org/abs/1901.11304}, pages 1--17, 2019.

\bibitem{m16}
Peter~R. Massopust.
\newblock {\em Fractal Functions, Fractal Surfaces, and Wavelets}.
\newblock Academic Press, New York, 2nd edition, 2016.

\bibitem{mccartin}
B.~J. McCartin.
\newblock Theory of exponential splines.
\newblock {\em J. Approx. Th.}, 66:1--23, 1991.

\bibitem{nav}
M.~A. Navacsu\'{e}s.
\newblock Fractal polynomial interpolation.
\newblock {\em Z. Anal. Anwendungen}, 24(2):401--418, 2005.

\bibitem{ns}
M.~A. Navacsu\'{e}s and M.~V. Sebasti\'{a}n.
\newblock Fractal splines.
\newblock {\em Monograf\'{i}as del Seminario Matem\'{a}tico Garc\'{i}a de
  Galdeano}, 33(161--168), 2006.

\bibitem{nm}
M.~A. Navascu\'{e}s and P.~Massopust.
\newblock Fractal convolution - a new operation between functions.
\newblock {\em arxiv:1805.11316v1}, pages 1--21, 2018.

\bibitem{ol}
A.~Oppenheim and J.~Lim.
\newblock The importance of phase in signals.
\newblock {\em IEEE}, 69(5):529--541, 1981.

\bibitem{sakai1}
M.~Sakai and R.~A. Usmani.
\newblock On exponential {B}-splines.
\newblock {\em J. Approx. Th.}, 47:122--131, 1986.

\bibitem{schoen}
I.~J. Schoenberg.
\newblock Contributions to the problem of approximation of equidistant data by
  analytic functions.
\newblock {\em Quart. Appl. Math.}, 4:45--99, 112--141, 1946.

\bibitem{spaeth}
H.~Spaeth.
\newblock Exponential spline interpolation.
\newblock {\em Computing}, 4:225--233, 1969.

\bibitem{sb}
J.~Stoer and R.~Bulirsch.
\newblock {\em Introduction to Numerical Analysis}.
\newblock Springer Verlag, New York, 2nd edition, 1993.

\bibitem{ub}
M.~Unser and T.~Blu.
\newblock Fractional {S}plines and {Wavelets}.
\newblock {\em {SIAM} {R}eview}, 42(1):43--67, 2000.

\bibitem{unserblu05}
M.~Unser and T.~Blu.
\newblock Cardinal exponential splines: {P}art {I} -- theory and filtering
  algorithms.
\newblock {\em {IEEE} Trans. Signal Processing}, 53(4):1425--1438, 2005.

\bibitem{zoppou}
C.~Zoppou, S.~Roberts, and R.~J. Renka.
\newblock Exponential spline interpolation in characteristic based scheme for
  solving the advective--diffusion equation.
\newblock {\em Int. J. Numer. Meth. Fluids}, 33:429--452, 2000.

\end{thebibliography}
   
\end{document}